\documentclass[11pt]{amsart}
\usepackage{fullpage}
\usepackage{color}
\usepackage{pstricks,pst-node,pst-plot} 
\usepackage{graphicx,psfrag} 
\usepackage{color} 
\usepackage{tikz}
\usepackage{pgffor}
\usepackage{hyperref}
\usepackage{todonotes}
\usepackage{subfigure}
\usepackage{mathtools}

\usepackage{bm}
\usepackage{multirow}

\usepackage{perpage}	 
	
\MakePerPage{footnote}	 

\newtheorem{theorem}{Theorem}[section]
\newtheorem*{theorem-non}{Theorem}
\newtheorem{lemma}[theorem]{Lemma}

\newtheorem{proposition}[theorem]{Proposition}

\newtheorem{conjecture}[theorem]{Conjecture}

\theoremstyle{definition}
\newtheorem{definition}[theorem]{Definition}
\newtheorem{example}[theorem]{Example}

\theoremstyle{remark}

\numberwithin{equation}{section}

\definecolor{gray}{rgb}{.5,.5,.5}

\definecolor{black}{rgb}{0,0,0}

\definecolor{blue}{rgb}{0,0,1}

\definecolor{red}{rgb}{1,0,0}

\definecolor{green}{rgb}{0,1,0}

\definecolor{yellow}{rgb}{1,1,.4}

\newrgbcolor{purple}{.5 0 .5}
\newrgbcolor{black}{0 0 0}
\newrgbcolor{white}{1 1 1}
\newrgbcolor{gold}{.5 .5 .2}
\newrgbcolor{darkgreen}{0 .5 0}
\newrgbcolor{gray}{.5 .5 .5}
\newrgbcolor{lightgray}{.75 .75 .75}
\newrgbcolor{lightred}{.75 0 0}

\begin{document}

\title{A central limit theorem\\ for the signatures of 2-bridge knots}

\author{Moshe Cohen}
\address{Mathematics Department, State University of New York at New Paltz, New Paltz, NY 12561}
\email{cohenm@newpaltz.edu}

\author{Cody Baker}
\address{Mathematics Department, State University of New York at New Paltz, New Paltz, NY 12561}
\email{bakerc16@newpaltz.edu}

\author{Henry Dam}
\address{Mathematics Department, State University of New York at New Paltz, New Paltz, NY 12561}
\email{damh1@newpaltz.edu}

\author{Rebecca Felber}
\address{Mathematics Department, State University of New York at New Paltz, New Paltz, NY 12561}
\email{felberr1@newpaltz.edu}

\author{Neal Madras}
\address{Department of Mathematics and Statistics, York University, Toronto, ON M3J 1P3}
\email{madras@yorku.ca}

\author{Ritvik Saha}
\address{15 Alan Crest Dr, Hicksville, NY 11801}
\email{ritvik.saha24@gmail.com}

\author{Daisy Thackrah}
\address{Mathematics Department, State University of New York at New Paltz, New Paltz, NY 12561}
\email{thackrad1@newpaltz.edu}

\begin{abstract}
Cohen, Lowrance, Madras, and Raanes \cite{CLMR} computed the average (absolute value of) signature over all 2-bridge knots with crossing number $c$ by introducing the number $s(c,\sigma)$ of 2-bridge knots of crossing number $c$ and signature $\sigma$.  Here we provide a closed formula for this number.  We use these calculations to show that the distribution of the signatures of 2-bridge knots with crossing number $c$ approaches a normal distribution as $c$ tends to infinity. 
\end{abstract}

\subjclass[2020]{57K10; 60F05; 05A10}
\keywords{rational knot, binomial coefficient, Pascal's triangle}

\maketitle

\section{Introduction}

Many results from random topology show central limit theorems for genus distributions:  Chmutov and Pittel \cite{ChmPit:genus} for random chord diagrams; Chmutov and Pittel \cite{ChmPit:poly} for randomly gluing of the sides of several polygonal discs;  Even-Zohar and Farber \cite{EZFar} for surfaces as above but with boundary (a 2D version); and Shrestha  \cite{Shr:genus} for random square-tiled surfaces.   Unlike these random topological objects, knots have been carefully identified and tabulated \cite{knotinfo}.  A concern for generating (and proving results on) large random knots is whether these samples are representative.  The goal of the work below is to understand the asymptotics of a well-understood, very large infinite family of knots so that one could eventually compare with random knot models.

Many random knot models appear in the literature:  random walks in the braid group \cite{SorVil, DunTio}; random petal diagrams \cite{EZHLN, EZHLN:distpetal}; random grid diagrams \cite{Doig}; random meander diagrams \cite{OwaTsv}; randomly embedded polygons \cite{CanSho, CCRS, TriCNSS, CanSchSho}; random diagrams \cite{CanChaMas}; and others.  Using work by Koseleff and Pecker \cite{KosPecRou:10crossings, KosPec4, KosPec3, KosPec:Alex}, Cohen and Krishnan \cite{CoKr} and with Even-Zohar \cite{CoEZKr} developed and studied a model for random 2-bridge knots.  These knots arise in models for polymers in work of Beaton, Eng, and Soteros with others \cite{BES, BES:stats, BES+:nano, BES+:4}, where this study of randomness may be important.  See also Blair, Pongtanapaisan, and Soteros \cite{BPS}. Furthermore, insights by Cohen, Krishnan, and Even-Zohar on these knots led to discoveries which underpin the present work.

A knot diagram is a planar projection of a knot where the double-points are decorated by over- and under-crossing information. The crossing number $c(K)$ of a knot $K$ is the minimum number of such points over all possible diagrams of $K$.  The bridge number of a knot is the minimum number of local maxima  of a diagram of the knot over all possible diagrams.  For more background see the textbook by Cromwell \cite{Crom}.  The only 1-bridge knot is the unknot; a simple closed curve always has at least one local max.  The family of 2-bridge knots have two local (non-nested) maxima from which four strands descend; the second and third strands are twisted $a_1$ times (with sign indicating which direction they are twisted), the third and fourth strands are twisted $-a_2$ times, the second and third strands are twisted $a_3$ times, and so on, with some finite number $k$ of twist regions, after which there are two local minima.  See Figure \ref{fig:knot}.  One can write the finite continued fraction $[a_1,\ldots,a_k]=p/q$ for $a_i\in\mathbb{Z}$ to obtain a rational number, as in Kauffman and Lambropoulou \cite{KauffLamb:tangles, KauffLamb:class}, or use the Farey graph as in Hoste, Shanahan, and Van Cott \cite{HSVC}.

\begin{figure}[h!]
\begin{tikzpicture}[scale=.5]
\draw[thick] (0,-1) -- (6,-1);
\draw[thick] (0,0) arc (90:270:.5);
\draw[thick] (6,-1) arc (-90:90:1.5);
\draw[thick] (0,2) arc (90:270:.5);
\draw[thick] (6,0) arc (-90:90:.5);
\draw[thick] (0,2) -- (1,2);
\draw[thick] (1,0) -- (2,0);
\draw[thick] (2,2) -- (5,2);
\draw[thick] (5,0) -- (6,0);
\foreach \x/ \y in {0/0}
    {
    \draw[thick] (\x+1,\y) -- (\x,\y+1);
    \draw[color=white, line width=10] (\x,\y) -- (\x+1,\y+1);
    \draw[thick] (\x,\y) -- (\x+1,\y+1);
    }
\foreach \x/ \y in {5/1}
    {
    \draw[thick] (\x,\y) -- (\x+1,\y+1);
    \draw[color=white, line width=10] (\x+1,\y) -- (\x,\y+1);
    \draw[thick] (\x+1,\y) -- (\x,\y+1);
    }
\foreach \x/ \y in {1/1, 2/0, 4/0}
    {
    \draw[thick] (\x,\y) -- (\x+1,\y+1);
    \draw[color=white, line width=10] (\x+1,\y) -- (\x,\y+1);
    \draw[thick] (\x+1,\y) -- (\x,\y+1);
    }
\foreach \x/ \y in {2/0, 4/0}
    {
    \draw[thick] (\x+1,\y) -- (\x,\y+1);
    \draw[color=white, line width=10] (\x,\y) -- (\x+1,\y+1);
    \draw[thick] (\x,\y) -- (\x+1,\y+1);
    }
\foreach \x/ \y in {3/0}
    {
    \draw[thick] (\x+1,\y) -- (\x,\y+1);
    \draw[color=white, line width=10] (\x,\y) -- (\x+1,\y+1);
    \draw[thick] (\x,\y) -- (\x+1,\y+1);
    }
\end{tikzpicture}
\caption{\label{fig:knot} A 2-bridge knot with continued fraction [1,1,3,1] drawn horizontally instead of vertically for space so that the ``maxima'' are on the left}
\end{figure}
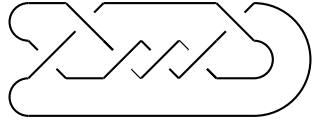

Instead of considering the set $K(c)$ of 2-bridge knots with crossing number $c$, Cohen \cite{Coh:lower} introduced a proxy set $T(c)$ of words corresponding to 2-bridge knot diagrams with crossing number $c$.  Since words and their reverses give two diagrams for the same knot, we abuse notation to refer to $T(c)$ as a partially-double-counted set of knots, where the the palindromic knots are those counted only once.  Cohen and Lowrance \cite{CohLow} used this combinatorial description to produce a convenient recursion  between $T(c)$ and $T(c-1)$ together with two copies of $T(c-2)$, and they computed the average Seifert 3-genus $\overline{g}(c)$ over $K(c)$.  See also independent work on this topic by Suzuki and Tran \cite{SuzukiTran}  and Ray and Diao \cite{RayDiao}.  
Cohen, Dinardo, Lowrance, Raanes, Rivera, Steindl, and Wanebo \cite{CohLowURSI} computed the number $t(c,g)$ of 2-bridge knots in $T(c)$ with crossing number $c$ and genus $g$.  They then used this calculation to show a central limit theorem over $K(c)$:  that the distribution of genera for 2-bridge knots with crossing number $c$ asymptotically approaches a normal distribution as $c$ approaches infinity.  Perhaps this suggests some sort of ``combinatorial nature'' for the set of 2-bridge knots of crossing number $c$.

We now tell the analogous story for the signature of a knot.  This is the signature, or difference between the numbers of positive and negative eigenvalues, of a matrix associated with the knot.  This matrix is related to the Seifert surface of the knot which is used to obtain the genus of the knot.  This invariant, introduced by Trotter \cite{Trotter} and given a diagrammatic formulation for alternating knots by Traczyk \cite{Tra} and Dasbach and Lowrance \cite{DasLow:sig}, plays an important role in comparing more difficult knot invariants.  We note that signature is always even.  For example, the knot in Figure \ref{fig:knot} has signature 0.  More background can be found in \cite{CLMR}.  Cohen, Lowrance, Madras, and Raanes \cite{CLMR} computed the average (absolute value of) signature $\overline{|\sigma|}(c)$ over all 2-bridge knots with crossing number $c$.  This group also developed recursions for the number $s(c,\sigma)$ of those knots in $T(c)$ with crossing number $c$ and signature $\sigma=2n$.  Small values are given in the following Table \ref{tab:sig}, which appears as Table 4 in \cite{CLMR}.

Below we will find closed formulas for these values and then use this result to show a central limit theorem for signatures of 2-bridge knots.  

\begin{theorem}
\label{thm:CLT}
For $c\geq 3$, let $K(c)$ be the set of 2-bridge knots of a given crossing number $c$ with each knot counted exactly once.  Let $k(c,\leq x)$ be the number of knots in $K(c)$ with signature at most $x$.  
For every real number $t$,
\begin{equation}
    \label{eq.CLTk}
    \lim_{c\rightarrow\infty}  \frac{k\left(c, \,\leq t\sqrt{c}\right)}{|K(c)|}   \;=\; \Phi(t),
\end{equation}
where $\Phi$ is the cumulative distribution  function of the standard normal distribution. 
That is, over the set $K(c)$ of 2-bridge knots with crossing number $c$, the distribution of signatures 
is asymptotically normally distributed  with mean 0 and variance $c$ as $c$ tends to infinity.
\end{theorem}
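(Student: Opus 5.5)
The plan is to work first with the partially-double-counted set $T(c)$, where the closed formula for $s(c,\sigma)$ is available, and then pass to $K(c)$.

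\textbf{Step 1: structure of the closed formula.} The formula for $s(c,2n)$ will be a sum of a small number of binomial coefficients, roughly of the form $\binom{c-2}{(c-2)/2+n}$ with lower-order corrections depending on parity. This mirrors Pascal's triangle, which the recursion $T(c)\leftrightarrow T(c-1),\,2\,T(c-2)$ suggests. We have $|T(c)| \sim 2^{c-2}/3$, which is consistent with the known counts. From this structure, $\sigma/2$ on $T(c)$ is, up to negligible terms, a centered binomial variable. Its variance is of order $c/4$, so $\sigma$ itself has variance of order $c$.

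\textbf{Step 2: CLT on $T(c)$.} We show that
\[
\frac{\#\{w\in T(c): \sigma(w)\le t\sqrt c\}}{|T(c)|}\to\Phi(t).
\]
We sum the closed formula over $2n\le t\sqrt c$ and apply the de Moivre--Laplace local limit theorem to each binomial coefficient. The correction terms are exponentially smaller, or are shifted binomials that produce the same limit after shifting by $O(1)$, which is $o(\sqrt c)$.

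\textbf{Step 3: variance bookkeeping.} We check that the variance constant comes out exactly $c$, and not some other multiple. If the binomial parameter is $m\approx c$ with $\sigma=2(X-m/2)$, then $\operatorname{Var}\sigma = 4\cdot m/4 = m \sim c$. This is the scaling the statement asserts, and we will confirm it against the explicit formula.

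\textbf{Step 4: transfer from $T(c)$ to $K(c)$.} Each non-palindromic knot appears twice in $T(c)$ and each palindromic knot once. Hence
\[
2k(c,\le x) = s(c,\le x) + p(c,\le x),
\]
where $p$ counts palindromic knots. The number of palindromes is $O(2^{c/2})$, since a palindromic word is determined by half its letters. This is $o(|T(c)|)$, so
\[
k(c,\le x)/|K(c)| = s(c,\le x)/|T(c)| + o(1)
\]
uniformly in $x$, and \eqref{eq.CLTk} follows.

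I expect the main obstacle to be Step 2 with the exact closed formula. The parity cases ($c$ even or odd, and residues mod 4) and the boundary terms at extreme signatures must all be shown to be negligible, uniformly in $n$. The uniform local limit theorem needs tail estimates, for which I would use a Chernoff/Hoeffding bound, to handle $|n|\gg\sqrt c$. An alternative route is to compute the generating function $\sum_n s(c,2n)z^n$ from the recursion, show it is a small combination of $(z^{1/2}+z^{-1/2})^{c-2}$-type terms, and apply Lévy continuity to the characteristic functions at $z=e^{i\theta/\sqrt c}$. This route avoids the case analysis, and I would try it first if the closed formula proves messy.
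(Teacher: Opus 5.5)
\textbf{Steps 1--2 rest on a wrong description of the closed formula.}

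Your overall architecture matches the paper's. You prove a CLT on $T(c)$ and then transfer it to $K(c)$ using $|T_p(c)|=2|K(c)|-|T(c)|=O(2^{c/2})=o(|T(c)|)$. The paper reads this bound off Ernst--Sumners; your half-word count works equally well.

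Theorem \ref{thm:sums} does not give a few binomials with negligible corrections. The formula
\[
s(2m,2n)=\sum_{i=0}^{m-|n|-2}\binom{2m-4-2i}{m+n-2-i}
\]
sums a column of Pascal's triangle over roughly $m$ rows. After dividing by $|T(2m)|=(4^{m-1}-1)/3$, row $2m-4-2i$ carries total weight tending to $\frac34\,4^{-i}$. This has two consequences:
\begin{itemize}
\item The rows with $i\ge 1$ hold a quarter of the mass, so keeping only the leading row gives $\frac34\Phi(t)$, not $\Phi(t)$.
\item These rows are not all $O(1)$ shifts of the leading one. For $i\approx m/2$ the row has about half the variance, and its normalized cumulative sum at $n=\lfloor t\sqrt{m/2}\rfloor$ tends to $\Phi(t\sqrt2)$.
\end{itemize}

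The missing idea is the paper's. Summing the column formula over $n'\le n$ turns $s(2m,\le 2n)$ into a sum of full binomial left tails, $\sum_i \mathrm{Bin}(2m-2i-4,\le m+n-i-2)$. Normalized, this is $\frac{2^{2m-4}}{|T(2m)|}\sum_i 4^{-i}\rho_{m,i}(n)$, a geometric mixture of binomial CDFs. For each fixed $i$ the standardized argument $n\sqrt2/\sqrt{m-i-2}$ tends to $t$. The integral de Moivre--Laplace theorem plus Lemma \ref{lem-unifcont} then gives $\rho_{m,i}\to\Phi(t)$ for each fixed $i$. Dominated convergence with the summable weights $4^{-i}$ finishes the even case. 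No local limit theorem or Chernoff tail bounds are needed, so the obstacle you anticipated does not arise.

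The paper then reduces odd $c$ to even $c$ via Theorem \ref{thm:binomials}, obtaining $3\Phi-2\Phi$. Your Step 3 is harmless, but its justification relies on the same incorrect single-binomial picture. The true variance is $c-O(1)$.

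\textbf{Your fallback route is correct and genuinely different.} Let $P_c(z)=\sum_n s(c,2n)z^n$. Lemma \ref{lem:signaturerecursion2} gives $P_{2m+1}=z(1+z)P_{2m}+z$ and $P_{2m+2}=z^{-2}(1+z)P_{2m+1}-z^{-1}$. Hence $P_{2m+2}=(z^{1/2}+z^{-1/2})^2P_{2m}+1$, and starting from $P_4=1$,
\[
P_{2m}(z)=\sum_{j=0}^{m-2}\bigl(z^{1/2}+z^{-1/2}\bigr)^{2j}=\frac{(z^{1/2}+z^{-1/2})^{2m-2}-1}{(z^{1/2}+z^{-1/2})^{2}-1}.
\]
At $z=e^{2\mathrm{i}\theta/\sqrt c}$ one has $z^{1/2}+z^{-1/2}=2\cos(\theta/\sqrt c)$. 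The denominator tends to $3$, so $P_{2m}(z)/P_{2m}(1)\to e^{-\theta^2/2}$. The odd case follows because $z(1+z)\to 2$ and the extra $z$ is negligible. L\'evy continuity then gives \eqref{eq.CLT2m}.

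This route bypasses the induction of Section \ref{sec:sums}, the dominated-convergence step and Theorem \ref{thm:binomials}. Extracting coefficients from the geometric sum even reproves Theorem \ref{thm:sums} in one line. What the paper's route offers instead is the explicit Pascal-column description of $s(c,\sigma)$, which is of independent interest, and a limit argument that uses only the classical de Moivre--Laplace theorem.
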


Using an important result on quasi-isomorphisms by Bj\"orklund and Hartnick \cite{BjoHar}, Dunfield and Tiozzo \cite{DunTio} prove a central limit theorem for the signature of random positive 3-braids.  Again, this seems to suggest some sort of ``combinatorial nature" for the set of 2-bridge knots with $c$ crossings. 
Might our Theorem \ref{thm:CLT} together with this result shed light on the behavior of  signatures of knots in general?  More generally, perhaps the approach by Dunfield and Tiozzo may lead to a more unified set of central limit theorems for knot invariants.  
 The key may be to have an explicit mechanism that builds up classes of knots by a sequence of
simple steps, possibly with constraints.

The proof of our Main Theorem \ref{thm:CLT} will follow from these closed formulas for $s(c,\sigma)$.

\begin{theorem}
\label{thm:sums}
For $c\geq 3$, let $T(c)$ be the partially-double-counted set of 2-bridge knots with crossing number $c$ where the palindromic knots are those counted only once.

When $c=2m$ is even, the number of 2-bridge knots with crossing number $c$ and signature $2n$ from among the set $T(c)$ is 
\begin{equation}
\label{eq:even}
  s(2m,2n)=\displaystyle\sum_{i=0}^{m-|n|-2}{2m-4-2i\choose m+n-2-i}.  
\end{equation}    
When $c=2m+1$ is odd, the number of 2-bridge knots with crossing number $c$ and signature $2n$ from among the set $T(c)$ is 
\begin{equation}
\label{eq:odd}
    s(2m+1,2n)=\sum_{i=0}^{\min\{m+n-3,\; m-n\}}{2m-3-2i\choose m+n-3-i}
\end{equation}
for all $n \neq 1$, 
and when $n=1$ we have
\begin{equation}
\label{eq:odd2}
    s(2m+1,2)=1+\sum_{i=0}^{m-2}{2m-3-2i\choose m-2-i},
\end{equation}
which is just one more than the formula given in Equation \eqref{eq:odd}.

These are sums of binomial coefficients in columns of (the centered) Pascal's triangle.
\end{theorem}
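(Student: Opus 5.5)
Our approach is to take the recursions for $s(c,\sigma)$ from \cite{CLMR} as given, and prove Theorem \ref{thm:sums} by strong induction on $c$. We check that the closed forms \eqref{eq:even}, \eqref{eq:odd}, \eqref{eq:odd2} satisfy the same recursions and agree with the initial values in Table \ref{tab:sig}. The recursion in \cite{CLMR} mirrors the Cohen--Lowrance decomposition of $T(c)$ into $T(c-1)$ and two copies of $T(c-2)$. Each piece comes with a shift of signature by $0$ or $\pm 2$, depending on parity and on the last letters of the word. Up to the exact shifts, which we read off from \cite{CLMR}, it has the shape
\[
s(c,2n)=s(c-1,2n\pm 2)+s(c-2,2n)+s(c-2,2n\pm 2)+(\text{correction}),
\]
where the correction term accounts for palindromes and for the small exceptional knots.

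The key steps are as follows.
\begin{enumerate}
\item Write $S(c,n)$ for the right-hand sides of \eqref{eq:even}--\eqref{eq:odd2}, and verify the base cases $c=3,4$ (and $5,6$ for safety) directly against Table \ref{tab:sig}.
\item Substitute the formulas into the recursion, splitting into $c$ even and $c$ odd, since the recursion links even $c$ to odd $c-1$ and vice versa. Apply Pascal's rule $\binom{N}{K}=\binom{N-1}{K}+\binom{N-1}{K-1}$ termwise to the sum for $S(c,n)$. This splits it into two column-sums of row index $N-1$. One of them matches the $S(c-1,\cdot)$ sum after reindexing $i$. The other telescopes against the two $S(c-2,\cdot)$ sums, which have row index $N-2$, using Pascal's rule once more.
\item Track the boundary terms produced by the summation limits $m-|n|-2$ and $\min\{m+n-3,m-n\}$. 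The switch inside the minimum, between $n\le 1$ and $n\ge 2$ in the odd case, has to be handled separately. The extra $+1$ at $n=1$ in \eqref{eq:odd2} should be exactly what the recursion's correction term contributes, coming from the palindromic or exceptional contribution at that signature.
\end{enumerate}

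As a consistency check, we use the symmetry $n\mapsto -n$ of \eqref{eq:even}, which follows from $\binom{N}{K}=\binom{N}{N-K}$. We also check that $\sum_n S(c,n)=|T(c)|$ matches the known count $|T(c)|$ of Cohen and Lowrance. This gives an independent check on the boundary bookkeeping.

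The main obstacle is step 3: the summation ranges shrink with $|n|$ in an asymmetric way in the odd case, so the telescoping leaves edge terms near $n=0,1$ and near $|n|=m-1$. These edge terms must cancel exactly against the correction terms in the recursion of \cite{CLMR}. Before writing any general argument, we would tabulate both sides for small $m$ to pin down which edge terms survive.
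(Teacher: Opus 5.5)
Your overall framework (strong induction on $c$, substituting the closed forms into a recursion, splitting with Pascal's rule, tracking edge terms) can be made to work. However, step 3 rests on a wrong expectation, and step 3 is where all the content of the proof lies. Made precise, the three-term recursion you have in mind is, for $c\ge 5$,
\begin{align*}
s(c,\sigma)&=s(c-1,\sigma-2)+s(c-2,\sigma-2)+s(c-2,\sigma) && (c\ \text{odd}),\\
s(c,\sigma)&=s(c-1,\sigma+2)+s(c-2,\sigma+2)+s(c-2,\sigma) && (c\ \text{even}).
\end{align*}
This is just Lemma~\ref{lem:signaturerecursion2} applied twice. The $+1$ at $\sigma=2$ for odd $c$ and the $-1$ at $\sigma=-2$ for even $c$ cancel in the composition, so there is no correction term at all.

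Hence the $+1$ in \eqref{eq:odd2} cannot be ``what the correction contributes.'' It is carried by the induction hypothesis through the unshifted term $s(c-2,2)$. The real issue is that the same inherited $+1$ in $s(2m-1,2)$ also lands on the right-hand side for $s(2m+1,4)$ (via $s(c-2,\sigma-2)$) and for $s(2m,0)$ (via $s(c-1,\sigma+2)$), where the target formulas have no extra $1$. The induction closes only because boundary binomials absorb it there:
\begin{itemize}
\item $\binom{1}{1}$, the $i=m-2$ term of \eqref{eq:odd} at $n=2$;
\item $\binom{0}{0}$, the $i=m-2$ term of \eqref{eq:even} at $n=0$.
\end{itemize}
For $n\ge 3$, the top term $\binom{2n-3}{2n-3}$ instead cancels the last term $\binom{2n-5}{2n-5}$ of the sum for $s(2m-1,2n-2)$. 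This is exactly where the $\min$ in \eqref{eq:odd} switches. Until you see this, your plan to cancel edge terms ``against the correction terms'' has nothing to cancel against.

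For comparison, the paper avoids the second layer of bookkeeping by using Lemma~\ref{lem:signaturerecursion2} directly. That lemma is a two-term recursion from $c$ to $c-1$ that is Pascal's rule on the nose, so each inductive step is one application of Pascal's rule plus a single boundary binomial. The paper also uses the symmetries of Proposition~\ref{prop:sym} as tools rather than consistency checks:
\begin{itemize}
\item $s(2m+2,2n)=s(2m+2,-2n)$ disposes of $n<0$ in the even case;
\item $s(2m+1,2)=s(2m+1,4)+1$ gives \eqref{eq:odd2} from the $n=2$ case.
\end{itemize}
As a result, neither correction term of the lemma is ever invoked. The $+1$ enters only through the hypothesis \eqref{eq:odd2}, as the $\binom{0}{0}$ term of $s(2m+2,0)$, and through Proposition~\ref{prop:sym} at $n=1$. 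Your three-term route, once repaired as above, does go through, including handling $n<0$ directly. It just needs two Pascal layers and more edge cases for the same result.
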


For more on this last sentence, see Example \ref{ex:10} below.

For similar asymptotic work on non-orientable 3-genus and crosscap number, see Cohen, Kindred, Lowrance, Shanahan, and Van Cott \cite{CKLSVC}.  For similar asymptotic work on the Casson invariant of 2-bridge knots, see Cohen and Ville \cite{CohVil}.  For similar asymptotic work by other authors on the braid index of 2-bridge knots, see Clark, Frank, and Lowrance \cite{LowURSI:braids} and independently Suzuki and Tran \cite{SuzTra:braids}.

\subsection*{Organization}  The reader can find relevant information from the previous paper \cite{CLMR} on signature in Section 2, including Table \ref{tab:sig} of values of $s(c,\sigma)$ and some results that will be used later.

In Section 3 we prove Theorem \ref{thm:sums} using an unusual induction proof involving parity.

In Section 4 we prove the central limit theorem for $K(c)$ of Theorem \ref{thm:CLT} by first proving in Theorem \ref{thm-CLT} a central limit theorem for $T(c)$.
 
In Section 5, we provide some numerical computations of variance.

\subsection*{Acknowledgements}  The research of NM was supported in part by a Discovery Grant from NSERC of Canada.  The authors would like to thank the Research, Scholarship, and Creative Activities (RSCA) office at the State University of New York at New Paltz for the Academic Year Undergraduate Research Experience (AYURE) recognition that CB and DT received under their professor MC.  Undergraduate students CB, HD, RF, and DT enrolled in MAT 490 Mathematics Research with MC, and RS was involved in this group through his high school's Science Research program.

\section{Background}

In this section we briefly provide relevant results from previous works that will be useful below.

Ernst and Sumners tell us the number of 2-bridge knots, with chiral pairs not counted separately:

\begin{theorem}[Ernst-Sumners \cite{ErnSum}, Theorem 5]
\label{thm:ernstsumners}
For $c\geq 3$, the number $|K(c)|$ of 2-bridge knots with crossing number $c$ where chiral pairs are \emph{not} counted separately is given by
\[
|K(c)| = 
\begin{cases}
\frac{1}{3}(2^{c-3}+2^{\frac{c-4}{2}}) & \text{ for } c\equiv 0 \text{ mod }4,\\
\frac{1}{3}(2^{c-3}+2^{\frac{c-3}{2}}) & \text{ for } c\equiv 1 \text{ mod }4, \\
\frac{1}{3}(2^{c-3}+2^{\frac{c-4}{2}}-1) & \text{ for } c\equiv 2 \text{ mod }4, \text{ and}\\
\frac{1}{3}(2^{c-3}+2^{\frac{c-3}{2}}+1) & \text{ for } c\equiv 3 \text{ mod }4.
\end{cases}
\]
\end{theorem}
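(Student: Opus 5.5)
The plan is to count 2-bridge knots through their reduced alternating (continued-fraction) diagrams and then apply Burnside's lemma for the reversal symmetry. Every 2-bridge knot $K(p/q)$ with crossing number $c$ has a reduced alternating diagram given by a continued fraction $[a_1,\ldots,a_k]$ with all $a_i\geq 1$ and $\sum a_i = c$. After fixing the usual normalization (e.g.\ $a_k\geq 2$), these correspond bijectively to compositions of $c$ with last part at least $2$; there are $2^{c-2}$ of them. By Schubert's classification, $K(p/q)\cong K(p/q')$ up to mirror image iff $q'\equiv \pm q^{\pm1}\pmod p$. Taking all $a_i$ positive already quotients out the mirror ambiguity. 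On compositions, $q\mapsto q^{-1}$ corresponds to reversing the continued fraction, and this is the only remaining identification. Hence
\[
|K(c)| = \tfrac12\bigl(J(c)+P(c)\bigr),
\]
where $J(c)$ is the number of normalized compositions of $c$ giving a knot (rather than a 2-component link), and $P(c)$ is the number of those that are palindromic. This is exactly the passage from $T(c)$ to $K(c)$ described in the introduction.

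Step one is to compute $J(c)$. A composition gives a knot exactly when its numerator $p$ is odd. The pair $(p,q)\bmod 2$ evolves by multiplying by the matrices $\begin{pmatrix}a_i&1\\1&0\end{pmatrix}$ over $\mathbb{F}_2$, which depend only on the parities of the $a_i$. I will set up a transfer-matrix recursion on the three nonzero vectors of $\mathbb{F}_2^2$, refined by the running total $c$, i.e.\ one step per crossing. This gives a linear recurrence of the form $J(c)=J(c-1)+2J(c-2)$, consistent with the $T(c)$, $T(c-1)$, $2T(c-2)$ recursion of \cite{CohLow}. Solving it yields the Jacobsthal-type count
\[
J(c)=\frac{2^{c-2}-(-1)^{c}}{3}
\]
(up to checking the sign with $c=3,4$), i.e.\ essentially one third of all $2^{c-2}$ compositions.

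Step two is to compute $P(c)$. A palindromic composition is determined by its first half, together with a middle part when $k$ is odd. When $c$ is odd, the middle part must be odd. I will show that the parity condition for $p$ odd can be read off the half-word. For a palindrome $M=AA^{T}$-type product over $\mathbb{F}_2$, the knot condition reduces to a condition on $A$ together with the central entry. Then I will count half-words of total length about $c/2$ satisfying it. This should give $2^{(c-3)/2}$ for $c$ odd and $2^{(c-4)/2}$ for $c$ even, with a correction term that depends on $c \bmod 4$. That correction is what produces the $\pm 1$ adjustments in the formula. Substituting $J(c)$ and $P(c)$ into $|K(c)|=\tfrac12(J+P)$ then gives the four cases, which I will sanity-check against $c=3,4,5,6$, where the counts are $1,1,2,3$.

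The main obstacle is step two. The delicate points are the parity bookkeeping for palindromes, in particular whether the middle part lies on the $a_k\geq 2$ normalization boundary, and how the $\bmod 4$ behaviour of $c$ interacts with $(-1)^c$ in $J(c)$ to produce exactly the constants $0,0,-1,+1$. If the direct half-word argument becomes messy, I will instead run a second transfer-matrix recursion for palindromes, growing from the center outward by adding equal parts at both ends. Each such step adds an even number of crossings, which explains the exponent $(c-3)/2$ or $(c-4)/2$ and the period-4 dependence.
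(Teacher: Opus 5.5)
Your decomposition $|K(c)|=\tfrac12\bigl(J(c)+P(c)\bigr)$ has the right shape. The paper only cites Ernst--Sumners, but its identity $2|K(c)|=|T(c)|+|T_p(c)|$ is the same decomposition. The problem is that you apply it to the wrong set, because the claim that taking all $a_i$ positive quotients out the mirror ambiguity is false. The mirror of $K(p/q)$ is $K(p/(p-q))$, and $p/(p-q)$ again has an all-positive expansion with the same sum: $[a_1,a_2,\ldots,a_k]$ goes to $[1,a_1-1,a_2,\ldots,a_k]$ when $a_1\ge 2$, and back. For example, $[3]=3/1$ and $[1,2]=3/2$ are the two trefoils. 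So on your $2^{c-2}$ compositions with $a_k\ge 2$, the identifications $q\mapsto \pm q^{\pm 1}$ form a Klein four-group rather than $\mathbb{Z}/2$, and a generic knot has four representatives. Plain reversal does not even preserve that set: $[1,2]$ reverses to $[2,1]=[3]$. Your value of $J(c)$ is also not what the parity transfer matrix gives on that set. Two of the three nonzero vectors of $\mathbb{F}_2^2$ have $p$ odd, and the true number of knot-giving compositions there is $\tfrac23\bigl(2^{c-2}-(-1)^c\bigr)$: both compositions for $c=3$, two of four for $c=4$, six of eight for $c=5$. So the $c=3,4$ check you plan would fail. The displayed $J(c)$ is right only because two errors cancel: the set is twice too large, and the knot fraction is taken as $\tfrac13$ instead of $\tfrac23$. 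Carried out as described, $\tfrac12(J+P)$ would come out roughly twice too big.

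The repair is to normalize both ends, $a_1\ge 2$ and $a_k\ge 2$. The mirror map exchanges $a_1\ge 2$ with $a_1=1$, so these $2^{c-3}$ compositions contain exactly one member of each mirror pair and are closed under reversal. Exactly $\bigl(2^{c-2}-(-1)^c\bigr)/3=|T(c)|$ of them are knots. Your palindrome target also needs correcting. Every factor matrix is symmetric, so a palindrome has matrix $NN^{T}$ or $NB_aN^{T}$, with $B_a$ the middle factor. If $(x,y)$ is the first row of $N$, then either $p=x^2+y^2$, which is a knot iff $x+y$ is odd, or $p=ax^2+2xy$, which is a knot iff $a$ and $x$ are both odd. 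In particular, for even $c$ only even-length palindromes contribute. Solving $P(c)=2|K(c)|-|T(c)|$ from the stated formula gives $P(c)=\bigl(2^{n}-(-1)^{n}\bigr)/3$ with $n=\lfloor (c-1)/2\rfloor$. That is about $\tfrac23\cdot 2^{(c-3)/2}$ for odd $c$ and $\tfrac23\cdot 2^{(c-4)/2}$ for even $c$, not $2^{(c-3)/2}$ or $2^{(c-4)/2}$ plus a bounded correction. So the half-word count must itself be run as a Jacobsthal recursion. The constants $0,0,-1,+1$ then come from combining the sign $(-1)^c$ in $|T(c)|$ with $(-1)^{\lfloor (c-1)/2\rfloor}$ in $P(c)$.
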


Proposition 3.1 of \cite{CohLow} counts the size $|T(c)|$ of the proxy partially-double-counted set of 2-bridge knots with crossing number $c$:

\begin{proposition}
\cite[Proposition 3.1]{CohLow}
\label{prop:CohLowT}
The number $|T(c)| = \frac{1}{3}(2^{c-2} - (-1)^c)$ of knots in the partially-double-counted set $T(c)$ is the Jacobsthal number $J(c-2)$ satisfying the recursive formula $t(c)=t(c-1)+2t(c-2)$ with initial values $t(2)=J(0)=0$ and $t(3)=J(1)=1$.
\end{proposition}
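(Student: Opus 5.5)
The plan is to prove the recursion $t(c)=t(c-1)+2t(c-2)$ for $t(c)=|T(c)|$ combinatorially, check the two initial values, and then solve the linear recurrence. Throughout I work with Cohen's word model \cite{Coh:lower}. There, an element of $T(c)$ is a word in a two-letter alphabet encoding a reduced alternating diagram of a 2-bridge knot with $c$ crossings, i.e.\ a continued fraction $[a_1,\ldots,a_k]$ with all $a_i>0$ and $\sum a_i=c$. The word is subject to the parity constraint that the closure is a knot rather than a two-component link, and to a normalization of the first and last letters that removes the trivial continued-fraction ambiguities. In this model a word and its reverse are two elements of $T(c)$ unless the word is a palindrome; this is exactly the ``partially-double-counted'' convention, and it lets me count words without quotienting by reversal.

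For the recursion, I classify a word $w\in T(c)$ by its terminal letters and build three maps. The first goes into $T(c-1)$: delete one crossing from the last twist region when that region has length at least $2$, then adjust a letter so the knot parity is restored. The other two go into $T(c-2)$: delete a terminal block of two crossings in one of two ways, distinguished by whether the last twist region was the corresponding pair of strands, in the spirit of the substitutions $\ldots a_k \mapsto \ldots a_k-2$ versus removing a final twist region of length $2$. The core of the argument is that these three maps have disjoint images-by-source and together give a bijection
\[
T(c)\;\longleftrightarrow\; T(c-1)\,\sqcup\, T(c-2)\,\sqcup\, T(c-2).
\]
To prove this, I write down the explicit inverse maps: append a crossing, or append one of two length-two blocks. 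I then check that each inverse lands in $T(c)$, which means preserving the knot parity condition and the endpoint normalization.

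This bijection is the step I expect to be the main obstacle. The delicate points are the parity condition, since removing a crossing can turn a knot into a link, so the correct local modification must be chosen, and the small boundary cases where the last twist region is short. Because the maps act on words rather than on knots, palindromes cause no trouble: the partially-double-counted convention is precisely what makes a word-level bijection sufficient. If the naive deletion fails, I would first work on the level of the parity vector of the continued fraction and only then translate back to words.

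Once the recursion holds, I check the initial values directly: $T(2)=\emptyset$, since there is no 2-crossing knot, and $T(3)$ consists of the trefoil word alone, so $t(2)=0$ and $t(3)=1$. The characteristic polynomial $x^2-x-2=(x-2)(x+1)$ gives $t(c)=\alpha 2^{c}+\beta(-1)^c$. Alternatively, I verify the formula by induction: the power-of-two parts satisfy
\[
2^{c-3}+2\cdot 2^{c-4}=2^{c-2},
\]
and the alternating parts satisfy
\[
-(-1)^{c-1}-2(-1)^{c-2}=-(-1)^c.
\]
With the initial values $\tfrac13(1-1)=0$ and $\tfrac13(2+1)=1$, this yields $|T(c)|=\tfrac13\bigl(2^{c-2}-(-1)^c\bigr)=J(c-2)$.
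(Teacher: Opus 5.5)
Your outline has the same shape as the argument the paper attributes to Cohen and Lowrance: a decomposition $T(c)\leftrightarrow T(c-1)\sqcup T(c-2)\sqcup T(c-2)$, then $t(2)=0$, $t(3)=1$, then solving the recurrence. Your final paragraph, covering the initial values and the closed form, is correct. But the decomposition is the entire content of the proposition, and you do not supply it. The maps are described only loosely (``adjust a letter so the knot parity is restored'', ``delete a terminal block of two crossings in one of two ways''), no inverse is actually written down, and you flag this step yourself as the main obstacle.

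Worse, the moves you do name break the knot/link condition. The parity of the numerator $p$ of $[a_1,\ldots,a_k]$ is governed by $p_j\equiv a_jp_{j-1}+p_{j-2} \pmod 2$. So it depends on the parities of all the $a_i$ and on the number of twist regions. Of your three moves, only $a_k\mapsto a_k-2$ always preserves it. Two examples:
\begin{itemize}
\item $[4,2]=9/2$ is the knot $6_1$, but removing its final twist region of length $2$ leaves $[4]$, the $(2,4)$ torus link.
\item $[2,1,3]=11/4$ is the knot $6_2$, but deleting one crossing from its last region gives $[2,1,2]=8/3$, a two-component link.
\end{itemize}
So the sketched bijection already fails at $c=6$. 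Any repair needs an explicit invertible rule whose images provably partition $T(c)$.

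The missing idea is to work in a description of $T(c)$ where the knot condition is insensitive to local surgery at the end of the word. In Cohen's $\{+,-\}$ model, $T(c)$ consists of words $(+)^{\epsilon_1}(-)^{\epsilon_2}(+)^{\epsilon_3}\cdots$ with the following properties:
\begin{itemize}
\item there are $c$ alternating runs;
\item $\epsilon_i\in\{1,2\}$ and $\epsilon_1=\epsilon_c=1$;
\item the total length satisfies $\sum_i\epsilon_i\equiv 1\pmod 3$.
\end{itemize}
In this model the knot/link dichotomy has been absorbed into the length condition.

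For $c\geq 4$, split $T(c)$ according to $(\epsilon_{c-2},\epsilon_{c-1})$, and write the last three runs with letters $x,y,x$.
\begin{itemize}
\item The tails $x\,yy\,x$ and $xx\,y\,x$ each become $x$. This gives two bijections onto $T(c-2)$.
\item The tails $x\,y\,x\mapsto xx\,y$ and $xx\,yy\,x\mapsto x\,y$ give bijections onto the words of $T(c-1)$ whose penultimate run has length $2$, respectively $1$.
\end{itemize}
Every move changes the length by $0$ or $3$, preserves the sign pattern, and leaves a final run of length $1$. So membership is automatic and the inverses are evident.

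Alternatively, in this model $|T(c)|$ simply counts choices of $\epsilon_2,\ldots,\epsilon_{c-1}$ with a prescribed number of $2$'s mod $3$. A roots-of-unity filter then gives $\frac13\bigl(2^{c-2}-(-1)^c\bigr)$ directly.
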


  These terms terms appear as A001045 in the On-Line Encyclopedia of Integer Sequences (OEIS) \cite{OEIS}.

The end of the proof of the Main Theorem \ref{thm:CLT} utilizes the following set $T_p(c)$ which is defined more easily in \cite{CohLow} in terms of words in $\{+,-\}$.

\begin{definition}
\label{def:palindrome}
A palindromic 2-bridge knot in $T(2m)$ has a diagram that can be rotated 180 degrees in the plane so that the rotated diagram is the mirror image of the original, which by definition makes these knots  amphichiral.    All of these knots have signature 0.

A palindromic 2-bridge knot in $T(2m+1)$ can be rotated 180 degrees out of the plane about a vertical axis in the plane such that the diagram is unaltered.

The palindromic knots $T_p(c)$ with crossing number $c$ are exactly those counted exactly once in $T(c)$ so that by construction two copies of the set $K(c)$ can be partitioned into $T(c)$ and $T_p(c)$.
\end{definition}

Just prior to our proof of Theorem \ref{thm:CLT} at the end of Section \ref{sec:CLT}, we will use Theorem \ref{thm:ernstsumners} and Proposition \ref{prop:CohLowT} to show that the size of $T_p(c)$ is negligible in comparison to $T(c)$.

Recall that $s(c,\sigma)$ counts the number of 2-bridge knots in the partially double-counted set $T(c)$ with crossing number $c$ and signature $\sigma$.  We turn these values into a triangle that we refer to as the \textbf{signature triangle}.  The following Table \ref{tab:sig} comes from Table 4 of \cite{CLMR}, obtained from a recursive result that we omit here.

\begin{table}[h!]
\begin{tabular}{c|cccc|c|c|c|ccccc}
$c\backslash\sigma$ & -10 & -8 & -6 & -4 & -2 & 0 & 2 & 4 & 6 & 8 & 10 & 12 \\
\hline
3 &   &   &   &  &  &  & 1 &  &  &  &  &  \\
4 &   &   &   &  &  & 1 &  &  &  &  &  &  \\
s5 &   &   &   &  &  &  & 2 & 1 &  &  &  &  \\
6 &  &  &  &   & 1 & 3 & 1 &  &  &  &  &  \\
7 &  &  &  &  &  & 1 & 5 & 4 & 1 &  &  &  \\
8 &  &  &  & 1 & 5 & 9 & 5 & 1 &  &  &  &  \\
9 &  &  &  &  & 1 & 6 & 15 & 14 & 6 & 1 &  &  \\
10 &  &  & 1 & 7 & 20 & 29 & 20 & 7 & 1 &  &  &  \\
11 &  &  &  & 1 & 8 & 27 & 50 & 49 & 27 & 8 & 1 &  \\
12 &  & 1 & 9 & 35 & 76 & 99 & 76 & 35 & 9 & 1 &  &  \\
13 &  &  & 1 & 10 & 44 & 111 & 176 & 175 & 111 & 44 & 10 & 1 \\
14 & 1 & 11 & 54 & 155 & 286 & 351 & 286 & 155 & 54 & 11 & 1 & 
\end{tabular}
\caption{Numbers $s(c,\sigma)$ of knots in $T(c)$ with crossing number $c$ and signature $\sigma$, as in Table 4 of \cite{CLMR}}
\label{tab:sig}
\end{table}

We note here that some of these entries can be found in the On-Line Encyclopedia of Integer Sequences (OEIS) \cite{OEIS}.  For even rows, the central terms for $\sigma=0$ appear as A006134, the terms for $\sigma=2$ appear as A057552, and the terms for $\sigma=4$ appear as A371964.  
For odd rows the terms for $\sigma=4$ appear as A079309, and the terms for $\sigma=6$ appear as  A371965.

It is not a coincidence that this triangle has some similarities to Pascal's triangle, which has also been referred to as  Khayyam's triangle in Iran,  Yang Hui's triangle in China, Tartaglia's triangle in Italy, and the Staircase of Mount Meru in India.

Lemma 3.6 from \cite{CLMR} shows that the coefficients in our signature triangle for $T(c)$ behave very similarly to entries in Pascal's triangle, with some slight deviation when $\sigma=\pm 2$:

\begin{lemma}
\cite[Lemma 3.6]{CLMR}
\label{lem:signaturerecursion2}
If $c\geq 4$, the number $s(c, \sigma)$ of words in $T(c)$ corresponding to a knot with signature $\sigma$ satisfies a different recurrence relation similar to that for binomial coefficients:
\begin{equation}
\label{eq:recursion2}
s(c,\sigma)=
\begin{cases}
s(c-1,\sigma-2)+s(c-1,\sigma-4) & \text{when $c$ is odd and $\sigma\neq 2$,}\\
s(c-1,\sigma-2)+s(c-1,\sigma-4)+1 & \text{when $c$ is odd and $\sigma= 2$,}\\
s(c-1,\sigma+2)+s(c-1,\sigma+4) & \text{when $c$ is even and $\sigma\neq -2$, and}\\
s(c-1,\sigma+2)+s(c-1,\sigma+4)-1 & \text{when $c$ is even and $\sigma= -2$.}\\
\end{cases}
\end{equation}
\end{lemma}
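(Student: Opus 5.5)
The plan is to prove Lemma~\ref{lem:signaturerecursion2} directly from the combinatorial model of $T(c)$, by strong induction on $c$. In that model (from \cite{Coh:lower,CohLow}), $T(c)$ is a set of words in $\{+,-\}$ encoding reduced alternating 2-bridge diagrams with $c$ crossings. Signature is read off through the Traczyk / Dasbach--Lowrance formula for reduced alternating diagrams, $\sigma(K)=s_A(D)-n_+(D)-1$. Here $s_A(D)$ is the number of circles in the all-$A$ state and $n_+(D)$ is the number of positive crossings.

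First I will make the Cohen--Lowrance decomposition explicit, tracking signature. $T(c)$ is the disjoint union of three pieces:
\begin{itemize}
\item a copy of $T(c-1)$, where one crossing is added to the last twist region;
\item two copies of $T(c-2)$, where a new twist region of length $1$ or $2$ is appended.
\end{itemize}
For each of the three operations I will compute the change in $s_A$ and in $n_+$. The point is that the change depends only on the parity of $c$, because the parity fixes which pair of strands the final twist region joins and fixes the orientation of that region. Adding one crossing to a twist region changes either $s_A$ or $n_+$ by one, never both. This gives a first recursion of the form
\[
s(c,\sigma)=s(c-1,\sigma\mp 2)+s(c-2,\sigma+\alpha)+s(c-2,\sigma+\beta)+\varepsilon(c,\sigma).
\]
Here the shifts $\alpha,\beta\in\{0,\mp 2,\mp 4\}$ are fixed by parity. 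The correction $\varepsilon$ comes from the few words of small length, or from the torus-knot word $(+\cdots+)$, where the operation fails to produce a reduced word or produces a knot counted with the wrong palindromic multiplicity.

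Second, I will eliminate the $T(c-2)$ terms. By the induction hypothesis applied at $c-1$ (with the opposite parity), $s(c-1,\cdot)$ is itself a sum of two $s(c-2,\cdot)$ terms. I will choose the regrouping so that the two $T(c-2)$ contributions in the first recursion, together with the appropriate piece of $s(c-1,\sigma\mp2)$, reassemble into $s(c-1,\sigma\mp4)$. The sign pattern ($\sigma-2,\sigma-4$ for $c$ odd and $\sigma+2,\sigma+4$ for $c$ even) should match the alternation of parities. As a consistency check I will use the mirror symmetry $s(2m,\sigma)=s(2m,-\sigma)$, which holds because mirroring is a bijection on $T(2m)$. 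The base cases $c=4,5$ (and $c=6$ if needed) are checked directly against Table~\ref{tab:sig}.

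The main obstacle is the boundary correction: showing that the discrepancy terms collapse to exactly $+1$ at $\sigma=2$ for $c$ odd, to exactly $-1$ at $\sigma=-2$ for $c$ even, and to $0$ otherwise. I would try to trace this to a single exceptional word, namely the one built on the empty or length-$1$ prefix whose image under the "append" map fails to be reduced, or lands in the palindromic part of $T(c)$. Its signature is $\pm2$ because it is essentially a twist knot or torus-type word. I would verify the bookkeeping of this word across one full odd--even cycle of the induction.
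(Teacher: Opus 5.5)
The paper does not prove this lemma; it is quoted from \cite{CLMR}. Your two-stage architecture is sensible: a three-term recursion from $T(c)\leftrightarrow T(c-1)\sqcup T(c-2)\sqcup T(c-2)$, then elimination of the $T(c-2)$ terms via the induction hypothesis at $c-1$. The gap is the one substantive step, the exact signature shift produced by each piece of the decomposition: you assert it but do not prove it, and the specifics you give would not work.

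\emph{The moves do not land in $T(c)$.} The moves you describe are not the Cohen--Lowrance recursion, which rewrites the $\{+,-\}$ words defining $T(c)$. Since $[a_1,\dots,a_k,1]=[a_1,\dots,a_k+1]$, appending a one-crossing twist region to a $(c-2)$-crossing diagram gives $c-1$ crossings, not $c$. Adding a crossing to the last twist region can turn a knot into a link: the trefoil $[3]$ becomes the $(2,4)$ torus link $[4]$.

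\emph{The local bookkeeping cannot give the shift you write down.} The new crossing contributes at most one to $n_+$ and at most one new circle to $s_A$. So without sign changes at other crossings, $\sigma=s_A-n_+-1$ moves by at most one. Under your ``either/or'' claim it moves by exactly $\pm1$, which is impossible between two knots. A shift of $\pm2$, as in $4_1=[2,2]\mapsto 5_2=[2,3]$, needs orientation reversal on part of the diagram, hence sign changes at old crossings. Proving that the net shift is nevertheless fixed by the parity of $c$ alone is the real content of the lemma, and the proposal gives no argument for it.

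\emph{You have misplaced the source of the $\pm1$.} The three-term recursion the data actually satisfy (check it against Table~\ref{tab:sig}) is
\[
s(c,\sigma)=s(c-1,\sigma\mp2)+s(c-2,\sigma)+s(c-2,\sigma\mp2)\qquad(\text{upper signs for $c$ odd, lower for $c$ even}),
\]
with $s(2,\cdot)\equiv 0$ and \emph{no} correction term. There is no room for one: given the lemma at $c-1$, this recursion at $c$ is equivalent to the lemma at $c$. The $\pm1$ comes from the induction hypothesis itself. For $c$ odd, the lemma at $c-1$ and signature $\sigma-4$ reads $s(c-1,\sigma-4)=s(c-2,\sigma-2)+s(c-2,\sigma)-\delta_{\sigma,2}$. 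Substituting this into the recursion produces exactly $+\delta_{\sigma,2}$. Symmetrically, for $c$ even, the $+1$ in the lemma at $(c-1,\sigma+4)$ with $\sigma+4=2$ becomes $-\delta_{\sigma,-2}$. The defect is seeded once, by $s(2,\cdot)\equiv0$ and $s(3,\sigma)=\delta_{\sigma,2}$, and then alternates sign.

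So hunting for an exceptional short or torus-type word at each step would give a wrong count. The same goes for palindromic-multiplicity effects, which cannot arise because $T(c)$ is a set of words. What is missing is a correct statement and proof, from the word model, of the recursion above with exactly these shifts.
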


Proposition 3.7 in \cite{CLMR} describes the symmetry among even and odd rows of the signature triangle, with a notable exception in the $\sigma=2$ column:

\begin{proposition}
\cite[Proposition 3.7]{CLMR}
\label{prop:sym}
    The number of $s(c,\sigma)$ of words in $T(c)$ satisfies the following symmetries:
    \begin{alignat*}{3}
        s(c,\sigma) = & \; s(c,-\sigma) & \quad &\text{if $c$ is even,}\\
    s(c,2) = & \; s(c,4)+1 & \quad & \text{if $c$ is odd and $\sigma$ is 2 or 4, and}\\
        s(c,\sigma)  = & \;   s(c,6-\sigma)& \quad & \text{if $c$ is odd and $\sigma\geq 6$~\text{or}~$\sigma\leq0$.}    
    \end{alignat*}
\end{proposition}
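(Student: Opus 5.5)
We prove all three symmetries together by induction on $c$, using the recursion of Lemma~\ref{lem:signaturerecursion2} to pass from row $c-1$ to row $c$. It is convenient to write the two odd-row statements as a single identity: for $c$ odd and every even $\sigma$,
\[
s(c,\sigma)=s(c,6-\sigma)+[\sigma=2]-[\sigma=4],
\]
where $[\,\cdot\,]$ is the Iverson bracket. For $\sigma=2$ this is $s(c,2)=s(c,4)+1$, and for $\sigma=4$ it is the same relation read backwards. For all other $\sigma$ it is the plain reflection $\sigma\mapsto 6-\sigma$. In the same notation, Lemma~\ref{lem:signaturerecursion2} reads
\[
s(c,\sigma)=s(c-1,\sigma-2)+s(c-1,\sigma-4)+[\sigma=2]\quad (c \text{ odd}),
\]
\[
s(c,\sigma)=s(c-1,\sigma+2)+s(c-1,\sigma+4)-[\sigma=-2]\quad (c \text{ even}).
\]

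\emph{Base case $c=3$.} Here $s(3,2)=1$ and every other entry is $0$ (Table~\ref{tab:sig}). So $s(3,2)=s(3,4)+1$, and the reflection $\sigma\mapsto 6-\sigma$ holds trivially for $\sigma\neq 2,4$, since both sides are $0$. Since the recursion is valid for all $c\ge 4$, the induction can start from row $3$.

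\emph{Odd $c\ge 5$, assuming the even symmetry for row $c-1$.} Apply the recursion at $\sigma$ and at $6-\sigma$:
\[
s(c,6-\sigma)=s(c-1,4-\sigma)+s(c-1,2-\sigma)+[\sigma=4].
\]
By the hypothesis $s(c-1,x)=s(c-1,-x)$, we have $s(c-1,4-\sigma)=s(c-1,\sigma-4)$ and $s(c-1,2-\sigma)=s(c-1,\sigma-2)$. Subtracting this from the recursion for $s(c,\sigma)$ gives
\[
s(c,\sigma)-s(c,6-\sigma)=[\sigma=2]-[\sigma=4],
\]
which is exactly the combined odd identity.

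\emph{Even $c\ge 4$, assuming the combined odd identity for row $c-1$.} Apply the recursion at $-\sigma$:
\[
s(c,-\sigma)=s(c-1,2-\sigma)+s(c-1,4-\sigma)-[\sigma=2].
\]
Now rewrite each term with the odd identity $s(c-1,x)=s(c-1,6-x)+[x=2]-[x=4]$:
\[
s(c-1,2-\sigma)=s(c-1,\sigma+4)+[\sigma=0]-[\sigma=-2],
\]
\[
s(c-1,4-\sigma)=s(c-1,\sigma+2)+[\sigma=2]-[\sigma=0].
\]
Adding these, the $[\sigma=0]$ terms cancel and the $[\sigma=2]$ term cancels the one from the recursion. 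This leaves
\[
s(c,-\sigma)=s(c-1,\sigma+2)+s(c-1,\sigma+4)-[\sigma=-2]=s(c,\sigma).
\]

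The only delicate point is this bookkeeping of the $\pm1$ correction terms; everything else is formal. Alternating the two steps from the base case $c=3$ proves the proposition for all $c\ge 3$.
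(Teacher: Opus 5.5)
Your proof is correct. The reformulation $s(c,\sigma)=s(c,6-\sigma)+[\sigma=2]-[\sigma=4]$ is equivalent to the two odd-row statements of Proposition~\ref{prop:sym}. I checked the bracket bookkeeping in both inductive steps, for instance $[2-\sigma=4]=[\sigma=-2]$ and $[4-\sigma=4]=[\sigma=0]$. I also checked the values $\sigma\in\{-2,0,2,4\}$ where the corrections actually fire. The base row $c=3$ and the starting index $c\ge 4$ of Lemma~\ref{lem:signaturerecursion2} fit together as you say.

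There is no proof in this paper to compare against. Proposition~\ref{prop:sym} is quoted from \cite{CLMR} and used as a black box in Section~\ref{sec:sums}, for $n<0$ in the even step and for $n=1$ in the odd step. You instead derive it from Lemma~\ref{lem:signaturerecursion2} and the row $c=3$. You do this by the same odd/even leapfrog induction the paper uses to prove Theorem~\ref{thm:sums}. So with your argument, all of Section~\ref{sec:sums} rests only on the recursion and the small base rows, with no further imported symmetry.

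Folding the two odd-row cases into a single reflection with a $\pm1$ defect is a good choice. It lets the $+1$ and $-1$ corrections of the recursion cancel mechanically rather than through a case analysis.
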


Theorem 3.8 from \cite{CLMR} even demonstrates that consecutive rows sum to binomial coefficients, although only one case was shown:

\begin{theorem}
\cite[Theorem 3.8]{CLMR}
\label{thm:binomials}
Let $T_m=T(2m+1)\cup T(2m+2)$.  
Then there are exactly 
$\binom{2m-1}{k}=\binom{2m-1}{m-1+\frac{\sigma}{2}}$  knots in $T_m$
with signature $\sigma=2k-2m+2$. In other words, 
\[s(2m+1,\sigma) + s(2m+2,\sigma) = \binom{2m-1}{k}.\]
\end{theorem}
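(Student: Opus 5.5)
The plan is to work with generating polynomials for the rows of the signature triangle, so that Lemma \ref{lem:signaturerecursion2} becomes a pair of polynomial identities. Then I will show by induction on $m$ that the sum of rows $2m+1$ and $2m+2$ is a shifted power of $(1+x)$. For each $c\geq 3$ set
\[
E_m(x)=\sum_{\sigma} s(2m,\sigma)\,x^{\sigma/2},\qquad O_m(x)=\sum_\sigma s(2m+1,\sigma)\,x^{\sigma/2},
\]
Laurent polynomials in $x$ with finitely many nonzero terms. The claim $s(2m+1,\sigma)+s(2m+2,\sigma)=\binom{2m-1}{k}$ with $\sigma=2k-2m+2$ (so $\sigma/2=k-(m-1)$) is exactly the identity
\[
U_m(x):=O_m(x)+E_{m+1}(x)=x^{1-m}(1+x)^{2m-1}.
\]

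First I translate Lemma \ref{lem:signaturerecursion2} into polynomial form. The odd case $s(2m+1,\sigma)=s(2m,\sigma-2)+s(2m,\sigma-4)+[\sigma=2]$ becomes
\[
O_m=x(1+x)E_m+x.
\]
The even case $s(2m+2,\sigma)=s(2m+1,\sigma+2)+s(2m+1,\sigma+4)-[\sigma=-2]$ becomes
\[
E_{m+1}=(x^{-1}+x^{-2})O_m-x^{-1}.
\]
This relation holds with $m-1$ in place of $m$ as well, i.e.\ $E_m=(x^{-1}+x^{-2})O_{m-1}-x^{-1}$, whenever $2m\geq 4$. I will double-check the edge terms: the correction terms $\pm1$ must be placed at the right powers $x^{1}$ and $x^{-1}$, and the recursion must be valid at the smallest rows used.

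The induction step is $U_m=x^{-1}(1+x)^2\,U_{m-1}$ for $m\geq 2$. The trick is to express both sides in terms of the single polynomial $E_m$.

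For $U_m$, substitute the even relation to get $U_m=(1+x^{-1}+x^{-2})O_m-x^{-1}$. Then insert $O_m=x(1+x)E_m+x$, which gives
\[
U_m=x^{-1}(1+x)\bigl[(1+x+x^2)E_m+x\bigr].
\]

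For $U_{m-1}=O_{m-1}+E_m$, solve the shifted even relation for $O_{m-1}$, namely $O_{m-1}=(x^2E_m+x)/(1+x)$. Then
\[
U_{m-1}=\frac{(1+x+x^2)E_m+x}{1+x}.
\]
Multiplying by $x^{-1}(1+x)^2$ reproduces the expression for $U_m$ exactly. Dividing by $1+x$ is harmless because it is an identity of rational functions between polynomials we know exist.

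The base case is $m=1$. From Table \ref{tab:sig}, $U_1=O_1+E_2=x+1=x^{0}(1+x)^1$. Iterating the step then gives $U_m=x^{1-m}(1+x)^{2m-1}$, and comparing coefficients of $x^{k-m+1}$ yields the theorem.

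The main obstacle is not the algebra but the bookkeeping. I need to confirm the exact range of $c$ for which Lemma \ref{lem:signaturerecursion2} applies, in particular the step producing $E_2$ from $O_1$ at $c=4$. I also need to confirm that the two exceptional $\pm1$ terms combine with the correct signs and exponents. The pleasant feature of this route is that the two corrections, $+x$ in the odd relation and $-x^{-1}$ in the even one, cancel into the single clean term $x$ inside the bracket. Without that cancellation the naive term-by-term Pascal recursion for $s(2m+1,\cdot)+s(2m+2,\cdot)$ would fail.
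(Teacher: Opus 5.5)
Your proof is correct. The two translations of Lemma~\ref{lem:signaturerecursion2} are right: the $\sigma=2$ correction becomes $+x^{1}$ and the $\sigma=-2$ correction becomes $-x^{-1}$. Both are used only inside the lemma's range. You use $O_m=x(1+x)E_m+x$ only for $m\geq 2$, i.e.\ $c\geq 5$, and the even relation only at $c=2m\geq 4$ and $c=2m+2\geq 4$. You also never need to divide by $1+x$. Directly, $(1+x)U_{m-1}=(1+x)O_{m-1}+(1+x)E_m=(x^2E_m+x)+(1+x)E_m$, and multiplying by $x^{-1}(1+x)$ gives your expression for $U_m$.

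The paper itself contains no proof of this statement. It is quoted from \cite{CLMR} and used only as a black box in the odd-$c$ half of the proof of Theorem~\ref{thm-CLT}. The nearest route inside the paper would be to add the two closed formulas of Theorem~\ref{thm:sums}, whose proof in Section~\ref{sec:sums} does not use Theorem~\ref{thm:binomials}, and collapse the result with Pascal's rule. For instance, $\binom{2m-1}{m+n-1}=\binom{2m-2}{m+n-1}+\binom{2m-3}{m+n-3}+\binom{2m-3}{m+n-2}$ peels off the $i=0$ terms of $s(2m+2,2n)$ and $s(2m+1,2n)$ and leaves the same problem at $m-1$. That route needs bookkeeping for empty and truncated sums, for the $\min$ in the upper limit of \eqref{eq:odd}, and for the extra $1$ in \eqref{eq:odd2}.

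Your generating-function argument absorbs all of that into two polynomial identities and is shorter. It also proves more than you claim. The bracket $(1+x+x^2)E_m+x$ is exactly $E_m+O_m$, so your computation is really the pair $E_m+O_m=(1+x)U_{m-1}$ and $U_m=x^{-1}(1+x)(E_m+O_m)$. Hence $E_m+O_m=x^{2-m}(1+x)^{2m-2}$. That is the companion identity $s(2m,\sigma)+s(2m+1,\sigma)=\binom{2m-2}{k}$ for $\sigma=2k-2m+4$, which the paper asserts after Theorem~\ref{thm:binomials} (``with a similar proof'') but never proves.
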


The alternative case is also true with a similar proof.  For $k=m-2+\frac{\sigma}{2}$ or $\sigma=2k-2m+4$,
\[s(2m,\sigma) + s(2m+1,\sigma) = \binom{2m-2}{k}.\]

\section{Proof of Theorem \ref{thm:sums}}
\label{sec:sums}

Even though it is not required for the proof, we provide information for the calculation of the case where $c=10$ in order to illustrate the beauty of Theorem \ref{thm:sums}.

\begin{example}
    \label{ex:10}
When $c=10$, we refer to the 0th through 6th rows of Pascal's triangle.  To get every entry of Table \ref{tab:sig} for $c=10$, we sum the entries in Pascal's triangle vertically as in Table \ref{tab:pascal}.

For example,  in the center column, we have   $29=s(10,0)=\binom{6}{3}+\binom{4}{2}+\binom{2}{1}+\binom{0}{0}=20+6+2+1$.
\end{example}

\begin{table}[h!]
\begin{tabular}{ccccccc|c|cccccc}
 &   &   &   &  &  &  & 1 &  &  &  &  &  &  \\
 &   &   &   &  &  & 1 &   & 1 &  &  &  &  &  \\
 &   &   &   &  & 1 &  & 2 &  & 1 &  &  &  &  \\
 &  &  &  &  1 &  & 3 &  & 3 &  & 1  &  &  &  \\
 &  &  & 1 &  & 4 &  & 6 &  & 4 &  & 1 &  &  \\
 &  & 1 &  & 5 &  & 10 &  & 10 &  & 5 &  & 1 &  \\
 & 1 &  & 6 &  & 15 &  & 20 &  & 15 &  & 6 &  &  1 \\
 \hline
  & 1 &  & 7 &  & 20 &  & 29 &  & 20 &  & 7 &  &  1 
\end{tabular}
\caption{\label{tab:pascal} The final row is $s(10,\sigma)$ with the first seven rows of Pascal's triangle above it.  Each entry in this row is a sum of the binomial coefficients above it.}
\end{table}

We establish the base cases for the induction argument below in the following example.

\begin{example}
    \label{ex:basecases}
When $m=1$ and $c=2m+1=3$,  
we have from Table \ref{tab:sig} that $s(3,2)=1$ with all other $s(c,\sigma)=0$.  Observe that the right hand side of Equation \eqref{eq:odd}
\[    \sum_{i=0}^{m+n-3}{2m-3-2i\choose m+n-3-i}= \sum_{i=0}^{n-2}{-1-2i\choose n-2-i}\]
is 0 for all $n$.  However, Equation \eqref{eq:odd2} for $n=1$ is one more than this by the formula.

When $m=2$ and $c=2m=4$, we have $s(4,0)=1$ with all other $s(c,\sigma)=0$.  Observe that the right hand side of Equation \eqref{eq:even}
\[\sum_{i=0}^{m-|n|-2}{2m-4-2i\choose m+n-2-i}=\sum_{i=0}^{-|n|}{-2i\choose n-i}\]
is non-zero exactly when $n=0$, in which case we have just the single term  $\sum_{i=0}^{0}{-2i\choose -i}={0\choose 0}=1$.

When $m=2$ and $c=2m+1=5$, we have $s(5,2)=2$ and $s(5,4)=1$ with all other $s(5,\sigma)=0$.  Observe that the right hand side of Equation \eqref{eq:odd}
\[    \sum_{i=0}^{m+n-3}{2m-3-2i\choose m+n-3-i}= \sum_{i=0}^{n-1}{1-2i\choose n-1-i}\]
is non-zero both for $n=1$, in which case we get the single term $\sum_{i=0}^{0}{1-2i\choose -i}={1 \choose 0}$=1, and for $n=2$, in which case we get the single term (curtailing the sum at $0$) $\sum_{i=0}^{0}{1-2i\choose 1-i}={1 \choose 1} =1$.  Note that Equation \eqref{eq:odd2} gives that $s(5,2)$ is one more than the first calculation.
\end{example}

We now perform an unusual induction proof on the crossing number $c$ in which the odd inductive hypothesis proves the even induction and in which the even inductive hypothesis proves the odd induction.  This technique is described on page 69 of the textbook Discrete Mathematics:  Elementary and Beyond by Lov{\'a}sz, Pelik{\'a}n, and Vesztergombi \cite{lovasz2003discrete} for the pair of Fibonacci relations $F_{2n-1}=F_n^2+F_{n-1}^2$ and $F_{2n}=F_{n+1}F_{n}+F_{n}F_{n-1}$.

\subsection{Assuming odd crossing number to prove even crossing number}
\begin{proof}
Assume Equation \eqref{eq:odd} for odd crossing number
\[ s(2m+1,2n)=\sum_{i=0}^{\min\{m+n-3,\; m-n \}}{2m-3-2i\choose m+n-3-i}\] for all $n \neq 1$, where the sum need only run until $m-n$ for $n\geq 1$, and when $n=1$ we have Equation \eqref{eq:odd2}
\[s(2m+1,2)=1+\sum_{i=0}^{m-2}{2m-3-2i\choose m-2-i}.\]
We will show:
\[  s(2m+2,2n)=\displaystyle\sum_{i=0}^{m-|n|-1}{2m-2-2i\choose m+n-1-i}.  \]

When $n\geq 1$, using Lemma \ref{lem:signaturerecursion2} 
and that $\sigma=2n\ne-2$, followed by the inductive hypothesis, we have that 
\begin{align*}
s(2m+2,2n) & =s(2m+1,2n+2)+s(2m+1,2n+4) \\
 & =\sum_{i=0}^{m-n-1}{2m-3-2i\choose m+n-2-i} +\sum_{i=0}^{m-n-2}{2m-3-2i\choose m+n-1-i} \\
 &= \left[ {2m-3-2(m-n-1)\choose m+n-2-(m-n-1)} + \sum_{i=0}^{m-n-2}{2m-3-2i\choose m+n-2-i} \right] +\sum_{i=0}^{m-n-2}{2m-3-2i\choose m+n-1-i} \\
 &=  {2n-1\choose 2n-1} + \sum_{i=0}^{m-n-2} \left[{2m-3-2i\choose m+n-2-i} + {2m-3-2i\choose m+n-1-i} \right] \\
 &=  {2m-2-2(m-n-1)\choose m+n-1-(m-n-1)} + \sum_{i=0}^{m-n-2} \left[{2m-2-2i\choose m+n-1-i}  \right] \\
 & =  \sum_{i=0}^{m-n-1} \left[{2m-2-2i\choose m+n-1-i}  \right],
\end{align*}
as desired.

When $n=0$, following the same lemma but following the alternative inductive hypothesis,
\begin{align*}
   s(2m+2,0) & = s(2m+1,2)+s(2m+1,4) \\
   & = \left[ 1+\sum_{i=0}^{m-2}{2m-3-2i\choose m-2-i}\right] +\sum_{i=0}^{m-2}{2m-3-2i\choose m-1-i} \\
  & ={0 \choose 0}+\sum_{i=0}^{m-2} \left[{2m-3-2i\choose m-2-i} +{2m-3-2i\choose m-1-i} \right] \\ 
  & = {2m-2-2(m-1) \choose m-1-(m-1) }+\sum_{i=0}^{m-2} \left[{2m-2-2i\choose m-1-i} \right] \\ 
  & = \sum_{i=0}^{m-1} {2m-2-2i\choose m-1-i},
\end{align*}
as desired.

Lastly, when $n<0$, we use Proposition \ref{prop:sym} to get
\[
s(2m+2,2n) = s(2m+2,-2n) = \sum_{i=0}^{m-|-n|-1} {2m-2-2i \choose m-n-1-i} = \sum_{i=0}^{m-|n|-1} {2m-2-2i \choose m-n-1-i}.
\]

Since ${p \choose k} = {p \choose p-k}$ with $p=2(m-1-i)$ and $k=(m-1-i)-n$, we have that 
\[
s(2m+2,2n)= \sum_{i=0}^{m-|n|-1} {2m-2-2i \choose m+n-1-i},
\]
completing this part of the inductive argument.
\end{proof}

\subsection{Assuming even crossing number to prove odd crossing number}
\begin{proof}
Assume Equation \eqref{eq:even} for even crossing number
\[ s(2m,2n)=\sum_{i=0}^{m-|n|-2}{2m-4-2i \choose m+n-2-i}.\]
We will show:
\[s(2m+1,2n)=\sum_{i=0}^{\min\{m+n-3,m-n\}}{2m-3-2i\choose m+n-3-i}\]
for all $n \neq 1$, where the sum need only run until $m-n$ for $n\geq 2$, and when $n=1$ we'll show
\[s(2m+1,2)=\sum_{i=0}^{m-2}{2m-3-2i\choose m-2-i}+1.\]

When $n\geq 2$, using Lemma \ref{lem:signaturerecursion2}  we have that 
\begin{align*}
    s(2m+1,2n) & =s(2m,2n-2)+s(2m,2n-4) \\
    & = \sum_{i=0}^{m-|n-1|-2}{2m-4-2i\choose m+n-3-i} +\sum_{i=0}^{m-|n-2|-2}{2m-4-2i\choose m+n-4-i} \\
    & = \sum_{i=0}^{m-(n-1)-2}{2m-4-2i\choose m+n-3-i} +\sum_{i=0}^{m-(n-2)-2}{2m-4-2i\choose m+n-4-i} \\
    & = \sum_{i=0}^{m-n-1}{2m-4-2i\choose m+n-3-i} +\sum_{i=0}^{m-n}{2m-4-2i\choose m+n-4-i} \\
    & =  \sum_{i=0}^{m-n-1}{2m-4-2i\choose m+n-3-i}  +\left[ \sum_{i=0}^{m-n-1}{2m-4-2i\choose m+n-4-i} +  {2m-4-2(m-n)\choose m+n-4-(m-n)}\right]\\
    & = \sum_{i=0}^{m-n-1}\left[ {2m-4-2i\choose m+n-3-i} + {2m-4-2i\choose m+n-4-i}\right] + {2n-4\choose 2n-4} \\
    & = \sum_{i=0}^{m-n-1}\left[ {2m-3-2i\choose m+n-3-i}\right] + {2m-3-2(m-n)\choose m+n-3-(m-n)} \\
    & = \sum_{i=0}^{m-n}{2m-3-2i\choose m+n-3-i},
\end{align*}
as desired.

When $n=1$, Proposition \ref{prop:sym} tells us that $s(2m+1,2)=s(2m+1,4)+1$. 
We use again that ${p \choose k} = {p \choose p-k}$ with $p=2m-3-2i$ and $k=m-1-i$ to yield the desired result.

When $n\leq 0$, using Lemma \ref{lem:signaturerecursion2} 
we have that 
\begin{align*}
    s(2m+1,2n) & =s(2m,2n-2)+s(2m,2n-4) \\
    & = \sum_{i=0}^{m-|n-1|-2}{2m-4-2i\choose m+n-3-i} +\sum_{i=0}^{m-|n-2|-2}{2m-4-2i\choose m+n-4-i} \\
    & = \sum_{i=0}^{m+(n-1)-2}{2m-4-2i\choose m+n-3-i} +\sum_{i=0}^{m+(n-2)-2}{2m-4-2i\choose m+n-4-i} \\
    & = \sum_{i=0}^{m+n-3}{2m-4-2i\choose m+n-3-i} +\sum_{i=0}^{m+n-4}{2m-4-2i\choose m+n-4-i} \\
    & = \left[ {2m-4-2(m+n-3)\choose m+n-3-(m+n-3)} +\sum_{i=0}^{m+n-4}{2m-4-2i\choose m+n-3-i}\right] +\sum_{i=0}^{m+n-4}{2m-4-2i\choose m+n-4-i} \\
    & = {2-2n\choose 0} +\sum_{i=0}^{m+n-4}\left[ {2m-4-2i\choose m+n-3-i} + {2m-4-2i\choose m+n-4-i}\right] \\
    & = {2m-3-2(m+n-3)\choose m+n-3-(m+n-3)} +\sum_{i=0}^{m+n-4}\left[ {2m-3-2i\choose m+n-3-i}\right] \\
    & = \sum_{i=0}^{m+n-3}{2m-3-2i\choose m+n-3-i},
\end{align*}
as desired.
\end{proof}

Together with the base cases discussed in Example \ref{ex:basecases}, these two inductive steps complete the proof of Theorem \ref{thm:sums}.

\section{Proof of Theorem \ref{thm:CLT}}
\label{sec:CLT}

Before we get to the proof of the central limit theorem for $K(c)$ we will prove a central limit theorem for $T(c)$.  First we need to normalize $s(c,\sigma)$ over all $\sigma$.  
Recall from Proposition \ref{prop:CohLowT} that
the total number of knots in $T(c)$ is    $\frac{1}{3} \left(2^{c-2}-(-1)^c\right)$, 
and hence 
\begin{equation}
   \label{eq.Tlim}
   \lim_{c\rightarrow\infty} \frac{2^{c}}{|T(c)|}   \;=\;  12\,.  
\end{equation}

\begin{definition}
For real $t$, we define the cumulative sum $s(c,\,\leq t)$ to be $\sum_{j:\,j\leq t}  s(c,j)$.  
We write $\Phi$ for  the cumulative distribution function of the standard normal distribution
\[   \Phi(t)  \;=\;  \frac{1}{\sqrt{2\pi}} \int_{-\infty}^t e^{-x^2/2}\,dx\,.
\]
\end{definition}

\begin{theorem}[Central Limit Theorem for $T(c)$]
   \label{thm-CLT}
For every real number $t$,
\begin{equation}
    \label{eq.CLT2m}
    \lim_{c\rightarrow\infty}  \frac{s\left(c, \,\leq t\sqrt{c}\right)}{|T(c)|}   \;=\; \Phi(t).
\end{equation}
That is, for knots in $T(c)$, the distribution of signatures is asymptotically normally distributed  with mean 0 and variance $c$ as $c$ tends to infinity.
\end{theorem}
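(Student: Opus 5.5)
The plan is to avoid the column sums of Theorem \ref{thm:sums} altogether and work with the pairwise binomial identity of Theorem \ref{thm:binomials}, together with its stated companion $s(2m,\sigma)+s(2m+1,\sigma)=\binom{2m-2}{m-2+\sigma/2}$. The point is that two consecutive rows of the signature triangle add up to a single row of Pascal's triangle, and the de Moivre--Laplace theorem applies to that row directly. Fix $t$. Summing the identity of Theorem \ref{thm:binomials} over $\sigma\le t\sqrt{c}$ gives
\[
s(2m+1,\le x)+s(2m+2,\le x)=\sum_{k\le m-1+x/2}\binom{2m-1}{k},\qquad x=t\sqrt{c}.
\]
Divide by $2^{2m-1}$. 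The right side is $P(B\le (2m-1)/2+x/2-1/2)$ for $B\sim\mathrm{Bin}(2m-1,1/2)$. Its standardization is $(x/2+O(1))/\sqrt{(2m-1)/4}=x/\sqrt{2m}+o(1)$, so the right side tends to $\Phi(t)$ as long as $x/\sqrt{2m}\to t$. That holds whether $c=2m+1$ or $c=2m+2$.

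The main obstacle is separating the two rows, since the identity only controls their sum. To do this I would show that each of $s(c,\le x)/|T(c)|$ separately has the same limit. From Proposition \ref{prop:CohLowT}, $|T(2m+1)|\approx 2^{2m-1}/3$ and $|T(2m+2)|\approx 2^{2m}/3$. So the two rows carry the fixed proportions $1/3$ and $2/3$ of the binomial total $2^{2m-1}$. If the normalized distribution functions $F_c(x)=s(c,\le x\sqrt c)/|T(c)|$ were already known to converge to a common limit $G$ along both parities, the identity would give $\frac13 G+\frac23 G=\Phi$, hence $G=\Phi$.

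To get convergence of each row, I would use the companion identity, which links row $2m$ to row $2m+1$, and chain the two identities:
\[
s(2m+2,\sigma)-s(2m,\sigma)=\binom{2m-1}{m-1+\sigma/2}-\binom{2m-2}{m-2+\sigma/2}.
\]
Telescoping over even rows writes $s(2m,\sigma)$ as an alternating-type sum of binomial rows. Equivalently, and more cleanly, I would use Theorem \ref{thm:sums} directly. For even $c=2m$, the count $s(2m,2n)$ is a sum over $j=m-2-i$ of central-shifted binomials $\binom{2j}{j+n}$, which gives
\[
\frac{s(2m,\le x)}{2^{2m-4}}=\sum_{j\le m-2}4^{\,j-(m-2)}\,P\big(\mathrm{Bin}(2j,\tfrac12)-j\le x/2\big).
\]
This is a geometric mixture with weights $4^{-r}$, $r=m-2-j$, and the weights have total mass $4/3$. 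Matching this with $|T(2m)|\approx 2^{2m-2}/3$ confirms the normalization.

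For each fixed $r$, the probability tends to $\Phi(t)$, because $2j\sim c$. The tail in $r$ is uniformly small since the weights are geometric, so dominated convergence gives the limit $\Phi(t)$. The odd case is identical using equation \eqref{eq:odd}. The extra $+1$ in \eqref{eq:odd2} and the truncation $\min\{m+n-3,m-n\}$ only remove or add terms with $j$ small or $|n|>j$, where the binomial vanishes anyway, so they do not affect the limit.

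Indexing details and the boundary effects at small $j$ are where care is needed, but the geometric weights make them negligible.
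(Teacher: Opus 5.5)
Your proposal is correct. For even $c$ it is the paper's argument, but for odd $c$ you take a different route.

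\textbf{Even case.} Theorem~\ref{thm:sums} turns $s(2m,\le x)/2^{2m-4}$ into a mixture of cumulative probabilities of centered binomial variables, with geometric weights $4^{-r}$ of total mass $\tfrac43$. De Moivre--Laplace applied termwise, plus dominated convergence, gives $\tfrac34\cdot\tfrac43\,\Phi(t)=\Phi(t)$. The termwise step needs Lemma~\ref{lem-unifcont} (or P\'olya's theorem), because the standardized argument $t\sqrt{c}/\sqrt{2j}$, with floor corrections, only tends to $t$; you should make that explicit.

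\textbf{Odd case, paper's route.} The paper never uses \eqref{eq:odd} or \eqref{eq:odd2}. It writes row $2m+1$ as the Pascal row $\binom{2m-1}{\cdot}$ minus row $2m+2$ via Theorem~\ref{thm:binomials}, and the already-proved even case gives $3\Phi(t)-2\Phi(t)$. This is exactly your opening ``$\tfrac13G+\tfrac23G=\Phi$'' observation. It becomes a complete argument once the even limit is known, so you need neither the common-limit hypothesis nor the telescoping.

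\textbf{Odd case, your route.} Rerunning the mixture argument on \eqref{eq:odd} also works. The truncation $\min\{m+n-3,m-n\}$ is precisely the range where $0\le m+n-3-i\le 2m-3-2i$. So swapping sums gives $s(2m+1,\le 2n)=\sum_{i=0}^{m-2}\mbox{Bin}(2m-3-2i,\le m+n-3-i)$, plus $1$ when $n\ge 1$. The row totals $2^{2m-3-2i}$ together with that extra $1$ add up to exactly $|T(2m+1)|=(2^{2m-1}+1)/3$, so the extra $1$ is negligible in the limit. The same support observation applied to \eqref{eq:even} lets you treat all $t$ at once, instead of reducing to $t\le 0$ by symmetry as the paper does.

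\textbf{Comparison.} Your route is uniform in parity and needs only Theorem~\ref{thm:sums}. The paper's route needs only the even closed form plus Theorem~\ref{thm:binomials}, and it shows directly how the limit transfers from one parity to the other.
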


The following property will be useful in the proof.  

\begin{lemma}
   \label{lem-unifcont}
Let $G_1,G_2,\ldots$ be a sequence of cumulative distribution functions such that\\ $\lim_{n\rightarrow\infty}G_n(t) \,=\,\Phi(t)$ for every $t\in \mathbb{R}$.  
Let $\{u_n\}$ be a sequence of real numbers that converges to $u\in \mathbb{R}$.  Then $\lim_{n\rightarrow\infty}G_n(u_n) \,=\, \Phi(u)$.
\end{lemma}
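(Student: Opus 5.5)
We use two facts about $\Phi$: it is continuous and monotone, and uniformly continuous on $\mathbb{R}$. The plan is a sandwich argument. Let $\varepsilon>0$ be given. Since $u_n\to u$, there is $N_0$ such that $u-\varepsilon\le u_n\le u+\varepsilon$ for all $n\ge N_0$.

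Each $G_n$ is a cumulative distribution function and hence nondecreasing. So for $n\ge N_0$,
\[
G_n(u-\varepsilon)\;\le\; G_n(u_n)\;\le\; G_n(u+\varepsilon).
\]
By hypothesis, pointwise convergence holds at the two fixed points $u\pm\varepsilon$. Taking $\liminf$ and $\limsup$ therefore gives
\[
\Phi(u-\varepsilon)\;\le\;\liminf_{n\to\infty}G_n(u_n)\;\le\;\limsup_{n\to\infty}G_n(u_n)\;\le\;\Phi(u+\varepsilon).
\]

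Now let $\varepsilon\downarrow 0$. Because $\Phi$ is continuous at $u$, both outer bounds tend to $\Phi(u)$. Hence the $\liminf$ and $\limsup$ coincide and equal $\Phi(u)$, which is the claim.

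The step that needs care is this final limit in $\varepsilon$, since it relies entirely on the continuity of the limiting distribution $\Phi$. If the limit had an atom at $u$, the conclusion could fail, and nothing about the individual $G_n$ (which here are step functions) is needed beyond their monotonicity. No uniform convergence of $G_n$ is required, although one could alternatively invoke P\'olya's theorem, which gives uniform convergence $\sup_t|G_n(t)-\Phi(t)|\to0$ because $\Phi$ is continuous, and then combine it with the continuity of $\Phi$. The sandwich argument is more elementary, so I would present that.
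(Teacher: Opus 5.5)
Your sandwich argument is correct and is essentially the paper's proof. Both use monotonicity of $G_n$ to trap $G_n(u_n)$ between $G_n$ at two fixed points on either side of $u$, then apply pointwise convergence there and continuity of $\Phi$ at $u$. The paper fixes $a<u<b$ with $\Phi(a),\Phi(b)$ within $\epsilon/2$ of $\Phi(u)$ up front, while you take $\liminf$/$\limsup$ at $u\pm\varepsilon$ and let $\varepsilon\downarrow 0$; this difference is cosmetic.
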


\begin{proof}
Assume $\{u_n\}$ converges to $u$.  Let $\epsilon>0$.  Since $\Phi$ is continuous, we can choose 
real $a$ and $b$ such that $a<u<b$ and
\[   \Phi(u)-\frac{\epsilon}{2} < \Phi(a)  <   
     \Phi(b) < \Phi(u)+\frac{\epsilon}{2}\,.
\]
By our hypotheses, we can choose $N_0$ such that $\Phi(u)-\epsilon <G_n(a)$, $G_n(b)<\Phi(u)+\epsilon$, and $u_n\in [a,b]$ for all $n\geq N_0$.  
Then for all $n\geq N_0$ we have $G_n(a)\leq G_n(u_n)\leq G_n(b)$ and hence $\Phi(u)-\epsilon < G_n(u_n) <\Phi(u)+\epsilon$.
Since $\epsilon$ is an arbitrary positive number, the lemma follows.
\end{proof}

\begin{proof}[Proof of Theorem \ref{thm-CLT}]
We begin with case that $c$ is even, and write $c=2m$.  
Since $s(2m,-2n)=s(2m,2n)$, it suffices to consider $n\leq 0$ in the following and prove Equation \eqref{eq.CLT2m}
for $t\leq 0$.

\begin{definition}
For real $x$ and natural number $k$, we define the cumulative sum over the binomial coefficients $\mbox{Bin}(k,\leq x)$ to be $\sum_{j:j\leq x}  \binom{k }{j}$.
\end{definition}

Recall that for $n\leq 0$, Theorem \ref{thm:sums} says that the number $s(2m,2n)$ of 2-bridge knots with crossing number $c$ and signature $2n$ from the set $T(c)$ is $\sum_{i=0}^{m+n-2}   \binom{ 2m-2i-4}{m+n-2-i}.$    
Example \ref{ex:10} and Table \ref{tab:pascal} interpret this term as the truncated sum of entries in a column of Pascal's triangle.  

From this, since $m+n-2-i\leq (2m-2i-4)/2$, we obtain
\begin{equation}
   \label{eq.ssumcum}
   s(2m,\leq 2n)   \;= \;   \sum_{i=0}^{m+n-2}  \mbox{Bin}\left(2m-2i-4,\,\leq m+n-i-2\right).
\end{equation}

Now we bring in probability.  

\begin{definition}
    For each positive integer $M$, let $X_M$ be a binomially distributed random variable with 
parameters $M$ and $1/2$, i.e.  for $k=0,1,\ldots,M$, we have
\begin{equation}
    \label{eq.XNdef} 
       \Pr(X_M=k)  \;=\;  \binom{M}{k}  \frac{1}{2^{M}}.
\end{equation}
Then $X_M$ has mean $M/2$ and variance $M/4$.  Accordingly, we let $X_M^*$ be the ``standardized'' version of $X_M$, 
i.e.
\[     X_M^*  \; :=\;   \frac{X_M-M/2}{\sqrt{M/4}}  \,,
\] 
and let $F_M$ be the cumulative distribution function of $X_M^*$:
\begin{equation}
   \label{eq.FNdef}
       F_M(t)  \;:=\;  \Pr(X_M^*\leq t)   \;=\;  \Pr\left(X_M\leq  (M/2)+t\sqrt{M/4}\right).
\end{equation}

Now consider the terms in the sum of Equation \eqref{eq.ssumcum}.  
Define
\begin{equation}
   \label{eq.rhodef}
       \rho_{m,i}(n)   \; :=\;   \frac{ \mbox{Bin}(2m-2i-4,\,\leq m+n-i-2)}{2^{2m-2i-4}}.
\end{equation}
\end{definition}

We then observe that 
\begin{eqnarray*}
    \rho_{m,i}(n)   & = &  \sum_{k:\,k\,\leq\, m+n-i-2}  \binom{2m-2i-4}{k} \, \frac{1}{2^{2m-2i-4}}
    \\
    & = &   \Pr\left(X_{2m-2i-4}\,\leq  (m-i-2)+n\right)   \hspace{5mm}\mbox{(by Equation \eqref{eq.XNdef})}
    \\
      & = &   \Pr\left(   \frac{X_{2m-2i-4}-(m-i-2)}{\sqrt{(m-i-2)/2}} \; \leq \;  \frac{n}{\sqrt{(m-i-2)/2}}\right)
      \\
      & = & F_{2m-2i-4}\left(  \frac{n\sqrt{2}}{\sqrt{m-i-2}}\right)    \hspace{5mm}\mbox{(by Equation \eqref{eq.FNdef})}.
\end{eqnarray*}

The classical De Moivre - Laplace Central Limit Theorem tells us that for every $t\in\mathbb{R}$ we have
\begin{equation}
   \label{eq.binCLT}
      \lim_{M\rightarrow\infty}F_M(t) \;=\; \Phi(t).
\end{equation}

Now consider $n$ as a function of $m$, specifically $n=\lfloor t\sqrt{m/2}\rfloor$ for a constant $t\leq 0$.  This choice gives the right scaling for the Central Limit Theorem.  In particular, 
since $n$ grows with $m$, we have for every fixed $i$ that     
\[    \lim_{ m\rightarrow\infty} \frac{n\sqrt{2}}{\sqrt{m-i-2}}   \;=\;  t\,.
\]
Thus it follows from Equation \eqref{eq.binCLT} and Lemma \ref{lem-unifcont}  that for every $i$ and every $t\leq 0$
\begin{equation}
    \label{eq.rhoconv}
      \lim_{m\rightarrow\infty}\rho_{m,i}\left(\lfloor t\sqrt{m/2}\rfloor \right)   \;=\;  \Phi(t).
\end{equation}
Now for $n=\lfloor t\sqrt{m/2}\rfloor$ with $t\leq 0$, we have
\begin{eqnarray}
   \nonumber
     \frac{s(2m,\leq 2n)}{|T(2m)|}  & = & \frac{2^{2m-4}}{|T(2m)|}\sum_{i=0}^{m+n-2}  
        \frac{   \mbox{Bin}(2m-2i-4,\leq m+n-i-2) }{ 2^{2i} \times 2^{2m-2i-4}}
     \\
     \nonumber
       & &    \hspace{15mm}\hbox{(by Equation \eqref{eq.ssumcum})}
     \\
     \nonumber
     & = & \frac{2^{2m-4}}{|T(2m)|}\sum_{i=0}^{m+n-2}  \frac{1}{2^{2i}} \, \rho_{m,i}(n)
                \hspace{8mm}\hbox{(by Equation \eqref{eq.rhodef})}.
\end{eqnarray}
Using the above expression together with Equations \eqref{eq.Tlim} and \eqref{eq.rhoconv} and the Dominated Convergence 
Theorem, we obtain for  $n=\left\lfloor t\sqrt{m/2}\right\rfloor\leq 0$ that
\begin{equation*}
   \label{eq.soverTlim}
     \lim_{m\rightarrow\infty}  \frac{s\left(2m,\leq t\sqrt{2m}\right)}{|T(2m)|} 
      \;=\;   \lim_{m\rightarrow\infty}  \frac{s(2m,\leq 2n)}{|T(2m)|}  \;=\;     
      \frac{3}{4}\sum_{i=0}^{\infty}\frac{1}{4^i}\,\Phi(t)  
      \;=\; \frac{3}{4} \,\frac{1}{1-\frac{1}{4}} \,\Phi(t) \;=\;  \Phi(t) \,,
\end{equation*}
where for the first equality, note that $2n\leq t\sqrt{2m} < 2n+2$.  
This proves Equation \eqref{eq.CLT2m} for the subsequence of even $c$.

Now we consider the case that $c$ is odd, and we write $c=2m+1$.   Fix $t\in \mathbb{R}$.  
By Theorem \ref{thm:binomials}, we have
for every $k$ that 
\begin{equation}
   \label{eq.oddtoeven}
   s(2m+1,2k-2m+2)   \;=\;   \binom{2m-1}{k}  \;-\;  s(2m+2,2k-2m+2) \,.
\end{equation}
Summing Equation \eqref{eq.oddtoeven} over all $k$ less than or equal to $(t/2)\sqrt{2m+1}+m-1$ gives
\begin{equation}
   \label{eq.oddtoevensum}
   \frac{s(2m+1,\,\leq t\sqrt{2m+1})}{|T(2m+1)|}  \;=\;   A_m(t)   \;-\, B_m(t), 
\end{equation}
where
\begin{equation*}
     \label{eq.Amtdef}
     A_m(t)  \; = \;   \frac{2^{2m-1}}{|T(2m+1)|}  \frac{ \mbox{Bin}(2m-1, \,\leq (t/2)\sqrt{2m+1}-m+1)}{2^{2m-1}}
\end{equation*}
and
\begin{equation}
     \label{eq.Bmtdef}
     B_m(t)  \; = \;   \frac{|T(2m+2)|}{|T(2m+1)|}  \frac{s(2m+2, \,\leq t\sqrt{2m+1})}{|T(2m+2)|}\,.
\end{equation}
Similarly to the case of even $c$, we find that 
\begin{eqnarray*}    A_m(t)  & = & 
   \frac{2^{2m-1}}{|T(2m+1)|} \,
  \Pr\left(X_{2m-1}-\frac{1}{2}(2m-1) \,\leq\, \frac{t}{2} \sqrt{2m+1}+\frac{1}{2}\right) 
   \\
   &=& \frac{2^{2m-1}}{|T(2m+1)|} \,F_{2m-1}\left(\frac{(t/2)\sqrt{2m+1} +(1/2)}{\sqrt{(2m-1)/4}}\right),
\end{eqnarray*}
and hence (using Equation \eqref{eq.Tlim} and Lemma \ref{lem-unifcont}) that
\begin{equation}
    \label{eq.Amtlim}
    \lim_{m\rightarrow\infty}A_m(t)  \;=\; \frac{12}{4} \,\Phi(t) \,.
\end{equation}
From Equations \eqref{eq.Tlim} and \eqref{eq.Bmtdef} and the fact that Equation \eqref{eq.CLT2m} holds for the subsequence of even $c$, 
we can apply Lemma \ref{lem-unifcont} to obtain 
 \begin{equation}
    \label{eq.Bmtlim}
    \lim_{m\rightarrow\infty}B_m(t)  \;=\; 2 \,\Phi(t) \,.
\end{equation}   
It now follows from Equations \eqref{eq.oddtoevensum}, \eqref{eq.Amtlim} and \eqref{eq.Bmtlim} that
\begin{equation*}
    \label{eq.soddlim}
    \lim_{m\rightarrow\infty}   \frac{s(2m+1,\,\leq t\sqrt{2m+1})}{|T(2m+1)|} \;=\; 3\Phi(t)\,-\,2\Phi(t)  \;=\; \Phi(t).
\end{equation*}
Therefore Equation \eqref{eq.CLT2m} holds for the subsequence of odd $c$, and the proof of Theorem \ref{thm-CLT} is complete. 
\end{proof}

Now we turn to the Central Limit Theorem for  the set $K(c)$ of 2-bridge knots with crossing number $c$.    

\begin{definition}
    Let $k(c,\sigma)$ be the number of knots in $K(c)$ with signature $\sigma$, and let $k(c,\,\leq x)$ be the number of knots in $K(c)$ 
with signature at most $x$.

Let $T_p(c)$ be the set of palindromic knots with crossing number $c$, let $s_p(c,\sigma)$ be the number of knots in $T_p(c)$ with signature $\sigma$, 
and let $s_p(c,\,\leq x)$ be the number of knots in $T_p(c)$ with signature at most $x$.   
  By construction, for every $c$ and $\sigma$,
\begin{equation*}
   \label{eq.KTT}
   2\, |K(c)|  \;=\;  |T(c)| \,+\,  |T_p(c)|    \hspace{5mm}\hbox{and}\hspace{5mm}   k(c,\sigma)  \;=\; \frac{1}{2}\left(s(c,\sigma)+s_p(c,\sigma)\right).
\end{equation*}
\end{definition}

From Theorem \ref{thm:ernstsumners} and Proposition \ref{prop:CohLowT}, we find
\begin{equation}
   \label{eq.KTlim}
   \lim_{c\rightarrow\infty}  \frac{|T(c)|}{2|K(c)|}  \;=\; 1.
\end{equation}

We can now prove the Main Theorem \ref{thm:CLT}.

\begin{proof}[Proof of Theorem \ref{thm:CLT}]
Following Equation \eqref{eq.KTT} we have
\begin{eqnarray*}
     \frac{k(c,\leq\sigma)}{|K(c)|}  & = & \frac{s(c,\leq \sigma)}{2\,|K(c)|}  \,+\, \frac{s_p(c,\leq \sigma)}{2\,|K(c)|} 
     \\
     & = &    \frac{s(c,\leq \sigma)}{|T(c)|} \,\frac{|T(c)|}{2\,|K(c)|}  \,+\, \frac{s_p(c,\leq \sigma)}{2\,|K(c)|}  \,.
\end{eqnarray*}
It follows that 
\begin{eqnarray}
   \nonumber
   \left|   \frac{k(c,\leq\sigma)}{|K(c)|}  \,-\,   \frac{s(c,\leq \sigma)}{|T(c)|} \right|   & \leq &        \frac{s(c,\leq \sigma)}{|T(c)|} \,
    \left|  \frac{|T(c)|}{2\,|K(c)|}   \,-\,1 \right|   \,+\,   \frac{|T_p(c)|}{2\,|K(c)|}
    \\
    \label{eq.knears}
       & \leq &       \left|  \frac{|T(c)|}{2\,|K(c)|}   \,-\,1 \right|   \,+\,   \left| \frac{2\,|K(c)|-|T(c)|}{2\,|K(c)|} \right|\,.
 \end{eqnarray}
By Equation \eqref{eq.KTlim}, the right-hand side of Equation \eqref{eq.knears} converges to 0 as $c\rightarrow\infty$, uniformly in $\sigma$.   
Therefore Equation \eqref{eq.CLTk} follows from 
Equation \eqref{eq.CLT2m} on the Central Limit Theorem for the set $T(c)$, and the theorem is proved.
\end{proof}

\section{Some numerical calculations of variance}

We compute the variance for small crossing number $c$.  First for the even case wtih $c=2m$, we have that the variance is the same over both $T(2m)$ and $K(2m)$ because all knots in $T_p(2m)$ have signature 0 as in Definition \ref{def:palindrome}.  See Table \ref{tab:vareven}.

\begin{table}[!h]
\begin{tabular}{c|c}
$c=2m$ & Variance over $T(2m)$ and also $K(2m)$\\
\hline
6	& 1.6         \\
8	& 3.428571429 \\
10	& 5.364705882 \\
12	& 7.343108504 \\
14	& 9.336263736 \\
16	& 11.33626374 \\
18	& 13.33626374 \\
20	& 15.33626374
\end{tabular}
\caption{\label{tab:vareven} Some values for the variance over $T(2m)$ and $K(2m)$}
\end{table}

Observe that the decimal $0.33626374$ is approximately $153/455$ and that 455 is indeed a factor for $T(c)$ when $c=16,18,20$.

Next for the odd case wtih $c=2m+1$, we compute the variance for  only $T(2m+1)$  since the signatures of knots in $T_p(2m+1)$ are not as well-behaved.  See Table \ref{tab:varodd}.

\begin{table}[!h]
\begin{tabular}{c|c}
$c=2m+1$ & Variance over $T(2m+1)$\\
\hline
7	&	2.454545455 \\
9	&	4.348837209 \\
11	&	6.333333333 \\
13	&	8.332357247 \\
15	&	10.33663004 \\
17	&	12.33635531 \\
19	&	14.33628663
\end{tabular}
\caption{\label{tab:varodd} Some values for the variance over $T(2m+1)$}
\end{table}

\begin{conjecture}
The variance over $T(c)$ for crossing number $c$ is approximately $c-5+.336$.
\end{conjecture}

\bibliographystyle{amsalpha}

\newcommand{\etalchar}[1]{$^{#1}$}
\def\cprime{$'$}
\providecommand{\bysame}{\leavevmode\hbox to3em{\hrulefill}\thinspace}
\providecommand{\MR}{\relax\ifhmode\unskip\space\fi MR }
\providecommand{\MRhref}[2]{%
  \href{http://www.ams.org/mathscinet-getitem?mr=#1}{#2}
}
\providecommand{\href}[2]{#2}

\end{document}